\newtheorem{theorem}{Theorem}
\newtheorem{lemma}[theorem]{Lemma}
\newtheorem{corollary}[theorem]{Corollary}
\newtheorem{proposition}[theorem]{Proposition}
\newtheorem{definition}[theorem]{Definition}
\newcommand{\Ind}{\mathbbm{1}}
\newcommand{\hm}{\widehat{\mathfrak{m}}}
\newcommand{\al}{\alpha}
\newcommand{\la}{\lambda}
\newcommand{\sig}{\sigma}
\newcommand{\si}{\sigma}
\newcommand{\be}{\beta}
\newcommand{\modu}[1]{\overline{#1}}
\newcommand{\half}{{\scriptscriptstyle{1\!/\!2}} }
\newcommand{\inter}{\bullet}
\newcommand{\eps}{\epsilon}
\newcommand{\mS}{\mathcal{S}}
\newcommand{\mN}{\mathcal{N}}
\newcommand{\NN}{\mathbb{N}}
\newcommand{\Cat}{\mathrm{Cat}}
\newcommand{\m}{\mathfrak{m}}
\newcommand{\n}{\mathfrak{n}}
\newcommand{\A}{{\bf A\/}}
\newcommand{\B}{{\bf B\/}}
\newcommand{\C}{{\bf C\/}}
\newcommand{\D}{{\bf D\/}}
\newcommand{\F}{{\bf F\/}}
\newcommand{\asc}{\textrm{asc}(\m)}
\newcommand{\dsc}{\textrm{dsc}(\m)}
\newcommand{\floor}[1]{\lfloor #1 \rfloor}
\newcommand{\TRL}{T_{\scriptscriptstyle{\mathrm{RL}}}}
\newcommand{\TLR}{T_{\scriptscriptstyle{\mathrm{LR}}}}
\newcommand{\s}{\mathfrak{s}}
\newcommand{\Eref}[1]{(\ref{#1})}
\begin{document}

\title{\bf Counting unicellular maps on non-orientable surfaces}

\author{
{\bf Olivier Bernardi\thanks{Supported by ANR project A3.}}\\
{\small Department of Mathematics,} \\
{\small Massachusetts Institute of Technology,}\\
{\small 77 Massachusetts Avenue,}\\
{\small Cambridge MA 02139, USA.}
\and 
{\bf Guillaume Chapuy\thanks{Supported by a CNRS/PIMS postdoctoral fellowship.
}} \\ 
{\small Department of Mathematics,}\\
{\small Simon Fraser University,} \\
{\small 8888 University Drive,} \\
{\small Burnaby B.C. V5A 1S6, Canada}.
}

\maketitle

\pagestyle{myheadings}
\markright{\small O. Bernardi, G. Chapuy -- Counting unicellular maps on non-orientable surfaces.}

\begin{abstract}
A unicellular map is the embedding of a connected graph in a surface in such a way that the complement of the graph is a topological disk. 
In this paper we present a bijective link between unicellular maps on a non-orientable surface and unicellular maps of a lower topological type, with distinguished vertices. From that we obtain a recurrence equation that leads to (new) explicit counting formulas for non-orientable unicellular maps of fixed topology. In particular, we give exact formulas for the precubic case (all vertices of degree $1$ or $3$), and asymptotic formulas for the general case, when the number of edges goes to infinity.
Our strategy is inspired by recent results obtained by the second author for the orientable case, but significant novelties are introduced: in particular we construct an involution which, in some sense,  ``averages'' the effects of non-orientability.\\
\end{abstract}

\section{Introduction}
A \emph{map} is an embedding of a connected graph in a (2-dimensional, compact, connected) surface considered up to homeomorphism. By \emph{embedding}, we mean that the graph is drawn on the surface in such a way that the edges do not intersect and the \emph{faces} (connected components of the complementary of the graph) are simply connected. Maps are sometimes referred to as \emph{ribbon graphs, fat-graphs}, and can be defined combinatorially rather than topologically as is recalled in Section~\ref{section:topo}. A map is \emph{unicellular} if is has a single face. For instance, the unicellular maps on the sphere are the plane trees.\\

In this paper we consider the problem of counting unicellular maps by the number of edges, when the topology of the surface is fixed.
In the orientable case, this question has a respectable history.
The first formula for the number $\eps_g(n)$ of orientable unicellular maps with $n$ edges and genus $g$ (hence $n+1-2g$ vertices) was given by Lehman and Walsh in \cite{Walsh:counting-maps-1}, as a sum over the integer partitions of size $g$.  Independently, Harer and Zagier found a simple recurrence formula for the numbers $\eps_g(n)$ \cite{Harer-Zagier}. Part of their proof relied on expressing the generating function of unicellular maps as a matrix integral. Other proofs of Harer-Zagier's formula were given in \cite{Lass:Harer-Zagier,Goulden:Harer-Zagier}. Recently, Chapuy~\cite{Chapuy:unicellular}, extending previous results for cubic maps~\cite{Chapuy:PTRF}, gave a bijective construction that relates unicellular maps of a given genus to unicellular maps of a smaller genus, hence leading to a new recurrence equation for the numbers $\eps_g(n)$. In particular, the construction in\cite{Chapuy:unicellular} gives a combinatorial interpretation of the fact that for each $g$ the number $\eps_g(n)$ is the product of a polynomial in $n$ times the $n$-th Catalan number 
$\Cat(n)=\frac{1}{n+1}{2n\choose n}$.\\

For non-orientable surfaces, results are more recent. The interpretation of matrix integrals over the Gaussian Orthogonal Ensemble (space of real symmetric matrices) in terms of maps was made explicit in  \cite{Goulden:maps-nonorientable+unicellular}. Ledoux~\cite{Ledoux:recursion-unicellular}, by means of matrix integrals and orthogonal polynomials, obtained for unicellular maps on general surfaces a recurrence relation which is similar to the Harer-Zagier one. As far as we know, no direct combinatorial nor bijective technique have successfully been used for the enumeration of a family of non-orientable maps until now.\\


A unicellular map is \emph{precubic} if it has only vertices of degree~$1$ and~$3$: precubic unicellular maps are a natural generalization of binary trees to general surfaces. 
In this paper, we give for all $h\in\frac{1}{2} \NN$ an explicit formula for the  number $\eta_h(m)$ of precubic unicellular maps of \emph{size} $m$ ($2m+\Ind_{h\in \NN}$ edges)  on the non-orientable surface of Euler Characteristic $2-2h$. These formulas (Corollaries~\ref{cor:integer} and~\ref{cor:noninteger}) take the form $\displaystyle \eta_h(m)=P_h(m)\Cat(m)$ if $h$ is an integer, and  $\displaystyle \eta_h(m)=P_h(m)4^m$ otherwise,
where $P_h$ is a polynomial of degree $3\lfloor h \rfloor$.
Our approach, which is completely combinatorial, is based on two ingredients. 
The first one, inspired from the orientable case~\cite{Chapuy:PTRF,Chapuy:unicellular}, is to consider some special vertices called \emph{intertwined nodes}, whose deletion reduces the topological type $h$ of a map. 
The second ingredient is of a different nature: we show that, among non-orientable maps of a given topology and size, the \emph{average number} of intertwined nodes per map can be determined explicitly. This is done thanks to an \emph{averaging involution}, which is described in Section~\ref{sec:averaging}. This enables us to find a simple recurrence equation for the numbers  $\eta_h(m)$. As in the orientable case, an important feature of our recurrence is that it is recursive \emph{only} on the topological type $h$, contrarily to equations of the Harer-Zagier type~\cite{Harer-Zagier,Ledoux:recursion-unicellular}, where also the number of edges vary. It is then easy to iterate the recurrence in order to obtain an explicit formula for $\eta_h(m)$.\\

In the case of \emph{general} (not necessarily precubic) unicellular maps, our approach does not work \emph{exactly}, but it does work \emph{asymptotically}.
That is, we obtain, with the same technique, the asymptotic number of non-orientable unicellular maps of fixed topology, when the number of edges tends to infinity (Theorem~\ref{thm:asymptotic}).
As far as we know, all the formulas obtained in this paper are new.\\

\section{Topological considerations}\label{section:topo}
In this section we recall some definitions on maps and gather the topological tools needed for proving our results. One of these tools is a canonical way to represent non-orientable maps combinatorially which will prove very useful for our purposes.\\


We denote $\NN=\{0,1,2,3,\ldots\}$ and $\frac{1}{2}\NN=\{0,\frac{1}{2},1,\frac{3}{2},\ldots\}$. For a non-negative real number $x$, we denote by $\floor{x}$ the integer part of $x$. For a non-negative integer $n$, we denote $n!!=n\cdot (n-2)!!$ if $n>1$, and $0!!=1!!=1$.

\subsection{Classical definitions of surfaces and maps}
{\bf \indent  Surfaces.} Our \emph{surfaces} are compact, connected, 2-dimensional manifolds. We consider surfaces up to homeomorphism. For any non-negative integer $h$, we denote by $\mS_h$ the torus of genus $h$, that is, the orientable surface obtained by adding $h$ \emph{handles} to the sphere. For any $h$ in $\frac{1}{2}\NN$, we denote by   $\mN_h$  the non-orientable surface obtained by adding $2h$ \emph{cross-caps} to the sphere. Hence, $\mS_0$ is the sphere, $\mS_1$ is the torus, $\mN_{1/2}$ is the projective plane and $\mN_1$ is the Klein bottle.  The \emph{type}  of  the surface $\mS_h$ or $\mN_h$ is the number $h$.
By the theorem of classification, each orientable surface is homeomorphic to one of the $\mS_h$ and each non-orientable surface is  homeomorphic to one of the $\mN_h$ (see e.g. \cite{Mohar:graphs-on-surfaces}).\\

\noindent {\bf Maps as graphs embedding.} Our \emph{graphs} are finite and undirected; loops and multiple edges are allowed. A \emph{map} is an embedding (without edge-crossings) of a connected graph into a surface, in such a way that the \emph{faces} (connected components of the complement of the graph) are simply connected. Maps are always considered up to homeomorphism. A  map is \emph{unicellular} if it has a single face.  

Each edge in a map is made of two \emph{half-edges}, obtained by removing its middle-point. The \emph{degree} of a vertex is the number of incident half-edges. A \emph{leaf} is a vertex of degree 1. 
A \emph{corner} in a map is an angular sector determined by a vertex, and two half-edges which are consecutive around it. The total number of corners in a map equals the number of half-edges which is twice the number of edges. A map is \emph{rooted} if it carries a distinguished half-edge called the \emph{root}, together with a distinguished side of this half-edge. 
The vertex incident to the root is the \emph{root vertex}.  The unique corner incident to the root half-edge and its distinguished side is the \emph{root corner}. 
\emph{From now on, all maps are rooted}.

The \emph{type} $h(\m)$ of a map $\m$ is the type of the underlying surface, that is to say, the Euler characteristic of the surface is $2-2h(\m)$. If $\m$ is a map, we let $v(\m)$, $e(\m)$ and $f(\m)$ be its numbers of vertices, edges and faces. These quantities satisfy the \emph{Euler formula}:
\begin{eqnarray}
e(\m)= v(\m)+f(\m)-2+2h(\m).
\end{eqnarray}

\noindent {\bf Maps as graphs with rotation systems and twists.} Let $G$ be a graph. To each edge $e$ of $G$ correspond two \emph{half-edges}, each of them incident to an endpoint of $e$ (they are both incident to the same vertex if $e$ is a loop). A \emph{rotation system} for $G$ is the choice, for each vertex $v$ of $G$, of a cyclic ordering of the half-edges incident to $v$.
We now explain the relation between maps and rotation systems. Our surfaces are locally orientable and an \emph{orientation convention} for a map $\m$ is the choice of an orientation, called \emph{counterclockwise orientation}, in the vicinity of each vertex. Any orientation convention for the map $\m$ induces a rotation system on the underlying graph, by taking the counterclockwise ordering of appearance of the half-edges around each vertex. 
Given an orientation convention, an edge $e=(v_1,v_2)$ of $\m$ is a \emph{twist} if the orientation conventions in the vicinity of the endpoints $v_1$ and $v_2$ are not simultaneously extendable to an orientation of a vicinity of the edge $e$; this happens exactly when the two sides of $e$ appear in the same order when crossed counterclockwise around $v_1$ and counterclockwise around $v_2$.
Therefore a map together with an orientation convention defines both a rotation system and a subset of edges (the twists). The \emph{flip} of a vertex $v$ consists in inverting the orientation convention at that vertex. This changes the rotation system at $v$ by inverting the cyclic order on the half-edges incident to~$v$, and changes the set of twists by the fact that non-loop edges incident to $e$ become twist if and only if they were not twist (while the status of the other edges remain unchanged). 
The next lemma is a classical topological result (see e.g. \cite{Mohar:graphs-on-surfaces}).
\begin{lemma}
A map (and the underlying surface) is entirely determined by the triple consisting of its (connected) graph, its rotation system, and the subset of its edges which are twists. Conversely, two triples define the same map if and only if one can be obtained from the other by flipping some vertices. 
\end{lemma}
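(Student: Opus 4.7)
The lemma has two directions: given a triple $(G,\rho,T)$ (connected graph, rotation system, twist subset) one must build a map whose underlying surface is canonically determined, and one must show that two triples yield the same map precisely when they are related by a sequence of vertex flips. My plan is to carry out the classical ribbon-graph (or polygon-gluing) construction and then check compatibility with flips.

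\textbf{Construction of the map from the triple.} I would first replace each vertex $v$ by a closed oriented disk $D_v$ with $\deg(v)$ disjoint arcs marked on its boundary, labelled by the half-edges incident to $v$ and placed in the counterclockwise cyclic order dictated by $\rho(v)$. For every edge $e=\{h_1,h_2\}$ I glue a rectangular ``ribbon'' $R_e=[0,1]\times[0,1]$, identifying $\{0\}\times[0,1]$ with the arc of $h_1$ and $\{1\}\times[0,1]$ with the arc of $h_2$; if $e\notin T$ the two identifications are chosen to agree with a common orientation of $R_e$, and if $e\in T$ one of them is precomposed with a reflection (a M\"obius-like twist). The result is a compact surface with boundary containing $G$; capping off each boundary component with a disk produces a closed surface $\Sigma$ in which $G$ embeds with simply connected face-complements, i.e.\ a map $\m$. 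Taking the counterclockwise orientation of each $D_v$ as orientation convention, one reads back exactly the prescribed rotation system and twist set, which proves the existence part and shows that the topology of $\Sigma$ (hence of $\m$) depends only on $(G,\rho,T)$.

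\textbf{Effect of flipping and uniqueness.} Next, I would observe that flipping a vertex $v$ is the same operation as reversing the orientation of the disk $D_v$ in the construction above. This is a homeomorphism of $\Sigma$ (it has no global effect, since we only change a local chart), so flipping does not change the map. On the level of combinatorial data, reversing the orientation of $D_v$ reverses the cyclic order at $v$ and, for each non-loop edge $e$ incident to $v$, exchanges the two possible gluings of the ribbon $R_e$; hence $e$ switches between twist and non-twist. A loop at $v$ is reflected on both of its ends simultaneously, so its twist status is preserved. This matches exactly the combinatorial description of a flip given in the statement.

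\textbf{Conclusion.} Conversely, any orientation convention on a map $\m$ induces a triple, and two orientation conventions differ at some subset $S$ of vertices, so the associated triples differ by the flips at the vertices of $S$. Thus if $(G,\rho_1,T_1)$ and $(G,\rho_2,T_2)$ yield the same $\m$, fixing two orientation conventions that recover them shows that they differ by a sequence of flips. The only technical care is in the bookkeeping of twist statuses under a flip at a vertex incident to loops and to edges that are themselves twists; this is the main (though mild) obstacle, and it is handled by the explicit reflection argument above. Together with the construction, this yields both the ``entirely determined'' and the ``if and only if'' parts of the lemma.
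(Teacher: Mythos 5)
Your proposal is correct in substance, but note that the paper does not actually prove this lemma: it is stated as a classical topological fact with a reference to Mohar and Thomassen's book, so there is no internal proof to compare against. What you wrote is precisely the standard argument that the cited reference formalizes: the ribbon-graph (disk-and-band) construction showing that a triple determines a bordered surface whose capping yields the map, together with the observation that the only auxiliary choice made when reading the combinatorial data off a map is a local orientation at each vertex, so two triples encode the same map if and only if they differ by vertex flips. Two points you pass over quickly, acceptable at this level of rigor but worth being aware of: (i) for the ``only if'' direction you implicitly use that the rotation system and the twist statuses read off from an orientation convention are invariants of the map up to homeomorphism (a homeomorphism of pairs, with local orientations pushed forward, preserves the cyclic order of half-edges around each vertex and the relative orientation along each edge); (ii) saying that reversing the orientation of $D_v$ ``is a homeomorphism of $\Sigma$'' is better phrased as: the surfaces built from a triple and from its flip at $v$ are canonically homeomorphic via the identity outside a neighborhood of $D_v$ and a reflection on $D_v$, with the incident ribbons reglued accordingly --- which is what your explicit reflection argument in fact establishes. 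Your bookkeeping of twist statuses under a flip (non-loop edges at $v$ toggle, loops at $v$ are unchanged) matches the paper's description exactly, so the combinatorial and topological sides of your argument fit together as claimed.
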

By the lemma above, we can represent maps of positive types on a sheet of paper as follows: we draw the graph (with possible edge crossings) in such a way that the rotation system at each vertex is given by the counterclockwise order of the half-edges, and we indicate the twists by marking them by a cross (see e.g. Figure~\ref{fig:graph}).  The faces of the map are in bijection with the \emph{borders} of that drawing, which are obtained by walking along the edge-sides of the graph, and using the crosses in the middle of twisted edges as ``crosswalks'' that change the side of the edge along which one is walking (Figure~\ref{fig:graph}). Observe that the number of faces of the map gives the type of the underlying surface using Euler's formula.
\begin{figure}[h]
 \begin{minipage}{.58\linewidth}
\centerline{\includegraphics[scale=.8]{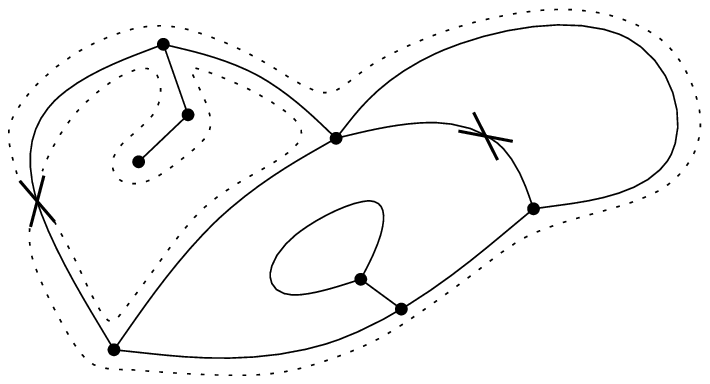}}
\caption{A representation of a map on the Klein bottle with three faces. The border of one of them is distinguished in dotted lines.}
\label{fig:graph}
 \end{minipage}\hfill 
\begin{minipage}{.38\linewidth}
\vspace{2.7mm}
\centerline{\includegraphics[scale=1]{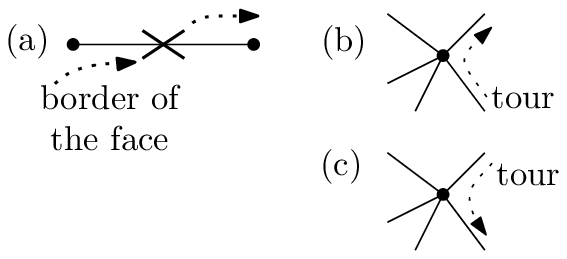}}
\vspace{2mm}
\caption{(a) a twist; (b) a left corner; (c) a right corner.}
\label{fig:topodefs}
\label{fig:leftright}
 \end{minipage} 
\end{figure}

\subsection{Unicellular maps, tour, and canonical rotation system}

{\bf Tour of a unicellular map.} Let $\m$ be a unicellular map. By definition, $\m$ has a unique face. 
The \emph{tour} of the map $\m$ is done by  following the edges of $\m$ starting from the root corner along the distinguished side of the root half-edge, until returning to the root-corner. Since $\m$ is unicellular, every corner is visited once during the tour. 
An edge is said  \emph{two-ways} if it is followed in two different directions during the tour of the map (this is always the case on orientable surfaces), and is said \emph{one-way} otherwise. 
The tour induces an \emph{order of appearance} on the set of corners, for which the root corner is the least element. We denote by $c< d$ if the corner $c$ appears before the corner $d$ along the tour. Lastly, given an orientation convention, a corner is said \emph{left} if it lies on the left of the walker during the tour of map, and \emph{right} otherwise (Figure~\ref{fig:leftright}).\\


\noindent {\bf Canonical rotation-system.}
As explained above, the rotation system associated to a map is defined up to the choice of an orientation convention. We now explain how to choose a particular convention which will be well-suited for our purposes. A map is said \emph{precubic} if all its vertices have degree $1$ or $3$, and its root-vertex has degree 1. 
Let $\m$ be a precubic unicellular map. Since the vertices of $\m$ all have an odd degree, there exists a unique orientation convention at each vertex such that the number of left corners is more than the number of right corners (indeed, flipping a vertex change its left corners into right corners and \emph{vice versa}). We call \emph{canonical} this orientation convention.  From now on, \emph{we will always use the canonical orientation convention}. This defines canonically a rotation system, a set of twists, and a set of left/right corners. Observe that the root corner is a left corner (as is any corner incident to a leaf) and that vertices of degree~$3$ are incident to either  $2$ or $3$ left corners. We have the following additional property. 

\begin{lemma}\label{lemma:edgeways}
In a (canonically oriented) precubic unicellular map, two-ways edges are incident to left corners only and are not twists. 
\end{lemma}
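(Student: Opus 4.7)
The plan is to track the walker's \emph{state} at each visit to a vertex during the tour of $\m$: \emph{aligned}, if its local ambient orientation matches the canonical orientation at that vertex, or \emph{flipped}, otherwise. Two observations are the backbone of the argument: a corner is left if and only if it is visited in the aligned state (this is essentially the definition of left corner once the canonical orientation is fixed), and crossing an edge preserves (resp.\ reverses) the walker's state if and only if the edge is non-twist (resp.\ twist).

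A concrete local rule translating these states into edge-side data is also needed. Working in the canonical (say counter-clockwise) convention at a vertex $v$, the aligned walker traces the face boundary keeping the face on its left, hence moves clockwise around $v$ at each visit. Consequently, the aligned walker \emph{exits} $v$ along a half-edge $h$ on the counter-clockwise (``left'') side of $h$ at $v$, and \emph{enters} $v$ along $h$ on the clockwise (``right'') side of $h$ at $v$; in the flipped state these sides swap. Now fix a two-way edge $e=(v_1,v_2)$ with half-edges $h_1,h_2$. Recall that the two sides of $e$ pair the sides of $h_1$ with the sides of $h_2$ according to the twist status: for a non-twist edge, the left side of $h_1$ lies opposite to the right side of $h_2$ (and symmetrically); for a twist edge, left-is-opposite-left and right-is-opposite-right.

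Combining these ingredients with the hypothesis that the two sides of $e$ are traversed in opposite directions, I would carry out two short case checks. First, if $e$ is non-twist, then for each traversal the walker is necessarily in the same state at $v_1$ as at $v_2$, and the direction reversal between the two sides forces the two traversals to share that state; hence the four corners of $v_1,v_2$ adjacent to $e$ are either all visited aligned (all left) or all flipped (all right). Second, if $e$ is a twist, then for each traversal the state at $v_1$ is opposite to that at $v_2$, and the direction reversal between the two sides arranges matters so that at one endpoint (say $v_2$) the walker is flipped on both traversals. In the twist case, this makes both corners of $v_2$ adjacent to $e$ right corners.

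To finish, I would invoke canonicality. Since every vertex of $\m$ has odd degree, the canonical orientation assigns every vertex a strict left-majority of its incident corners. In the twist case, two out of at most three corners at $v_2$ are right, contradicting the strict left-majority; hence $e$ cannot be a twist. In the non-twist case, the ``all right'' alternative is likewise excluded by canonicality at $v_1$ (or $v_2$), leaving only the ``all left'' non-twist possibility, which is exactly the conclusion of the lemma. Leaves, whose single corner is automatically visited aligned under the canonical orientation, and loops, where some of the four corners adjacent to $e$ coincide at the shared vertex, are covered by the same canonicality argument. The main obstacle is bookkeeping: setting up left/right sides of half-edges, the state-evolution rule across edges, and the twist/non-twist side-pairings consistently enough that the two case checks reduce to a mechanical verification.
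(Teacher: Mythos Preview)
Your proof is correct and rests on the same mechanism as the paper's: tracking the walker's aligned/flipped state and invoking the canonical left-majority at each odd-degree vertex. The paper's argument is organized a bit more economically: it observes directly (without splitting on twist status) that the two corners flanking either half-edge of a two-way edge must share the same left/right status, forces them both left by canonicality, and only then rules out twists via the fact that crossing a twist always toggles left/right.
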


\begin{proof}
Let $e$ be a two-ways edge, and let $c_1, c_2$ be two corners incident to the same vertex and separated by~$e$ ($c_1$ and $c_2$ coincide if that vertex has degree 1).  Since $e$ is two-ways, the corners $c_1$, $c_2$ are either simultaneously left or simultaneously right. By definition of the canonical orientation, they have to be simultaneously left. Thus two-way edges are only incident to left corners. Therefore two-ways edges are not twists since following a twisted edge always leads from a left corner to a right corner or the converse. 
\end{proof}

\subsection{Intertwined nodes.} \label{subsection:intertwined}
We now define a notion of \emph{intertwined node} which generalizes the definition given in \cite{Chapuy:PTRF} for precubic maps on orientable surfaces. 
\begin{definition}\label{def:intertwined}
Let $\m$ be a (canonically oriented) precubic unicellular map, let $v$ be a vertex of degree 3, and let $c_1$, $c_2$, $c_3$ be the incident corners in counterclockwise order around $v$, with the convention that $c_1$ is the first of these corners to appear during the tour of $\m$. The vertex $v$ is called an \emph{intertwined node} if $c_3$ appears before $c_2$ during the tour of $\m$. 

Moreover, we say that the vertex $v$ has flavor {\A} if it is incident to three left corners. Otherwise, $v$ is incident to exactly one right corner, and we say that $v$ is of flavor \B, \C, or {\D} respectively, according to whether the right corner is $c_1$, $c_2$ or $c_3$.
\end{definition}

Observe that the definition of the canonical orientation was a prerequisite to define intertwined nodes. 
The intertwined of some unicellular maps on the Klein bottle are indicated in Figure~\ref{fig:klein}. 
We will now show that intertwined nodes are exactly the ones whose deletion decreases the type of the map without disconnecting it nor increasing its number of faces. 

The \emph{opening} of an intertwined node of a map $\m$ is the operation consisting in splitting this vertex into three (marked) vertices of degree $1$, as in Figure~\ref{fig:opening}. 
That is, we define a rotation system and set of twists of the embedded graph $\n$ obtained in this way (we refrain from calling it a map yet, since it is unclear that it is connected) as the rotation system and set of twists inherited from the original map $\m$. 
\begin{figure}
\begin{minipage}{.48\linewidth}
\vspace{4.5mm}
\centerline{\includegraphics[scale=.8]{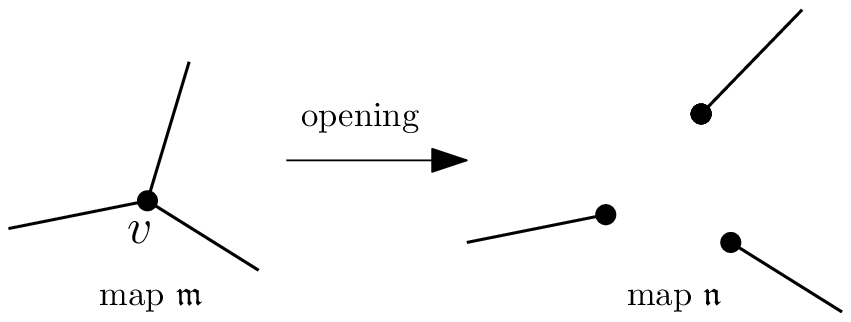}}
\vspace{3mm}
\caption{Opening of an intertwined node (of a precubic map).}
\label{fig:opening}
\end{minipage}\hfill 
\begin{minipage}{.40\linewidth}
\vspace{5mm}
\centerline{\includegraphics[scale=1]{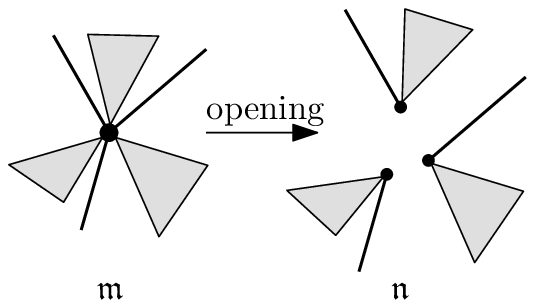}}
\caption{Opening for non-precubic unicellular maps (dominant case).}
\label{fig:openingdominant}
\label{fig:twist}
 \end{minipage} 
\end{figure} 
\begin{samepage}
\begin{proposition}\label{prop:opening}
Let $n$ be a positive integer and let $h$ be in $\{1,3/2,2,5/2,\ldots\}$.
For each flavor $\F$ in $\{\A,\B,\C,\D\}$, the opening operation gives a bijection between the set of precubic unicellular maps with $n$ edges, type $h$, and a distinguished intertwined node of flavor $\F$, and the set of precubic unicellular maps with $n$ edges, type $h-1$ and three distinguished vertices of degree $1$.
The converse bijection is called the \emph{gluing of flavor $\F$}. 

Moreover, if a precubic unicellular map $\m$ is obtained from a precubic unicellular map~$\n$ (of lower type) by a gluing of flavor $\F$, then $\m$ is orientable if and only if $\n$ is orientable and $\F=\A$.
\end{proposition}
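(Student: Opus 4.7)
The plan is to analyze the tour of $\m$ around the intertwined node $v$, use this to establish unicellularity of the opened object $\n$, and then describe the inverse gluing operation separately for each flavor. Throughout, let $v$ have corners $c_1,c_2,c_3$ in counterclockwise order (canonical convention) with $c_1$ the first visited by the tour, and let $e_i$ denote the edge between $c_{i-1}$ and $c_i$ (indices mod $3$). The tour of $\m$ visits each corner once and thereby decomposes into three arcs bounded by the visits to $c_1,c_2,c_3$.

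First I would check that opening $v$ yields a connected unicellular map $\n$ of type $h-1$. The intertwined condition $c_1<c_3<c_2$ dictates how the three arcs are cyclically arranged; together with the three new turnaround corners at the leaves $v_1,v_2,v_3$ created by the splitting, they concatenate into one single closed tour of $\n$. Had the order instead been $c_1<c_2<c_3$ (non-intertwined case), the same analysis would produce three disjoint closed tours, i.e.\ either disconnection or a jump in the number of faces: this is precisely why the intertwined hypothesis is needed. The Euler formula $e(\m)=v(\m)+f(\m)-2+2h(\m)$, combined with $v(\n)=v(\m)+2$, $e(\n)=e(\m)$, and $f(\n)=f(\m)=1$, then gives $h(\n)=h(\m)-1$. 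Moreover, the three new leaves inherit a canonical order from the tour of $\n$, so the opened map is naturally equipped with a triple of distinguished degree-$1$ vertices.

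For the converse, given a precubic unicellular map $\n$ of type $h-1$ with three distinguished leaves, labelled $v_1<v_2<v_3$ according to their order of appearance in the tour of $\n$, the gluing of flavor $\F$ merges them into a single degree-$3$ vertex $v$ by specifying a cyclic ordering of the three edges around $v$ together with a subset declared to be twists. These data are chosen as the unique combination that makes $v$ an intertwined node of flavor $\F$: flavor $\A$ uses the natural cyclic order and introduces no twist; each of flavors $\B,\C,\D$ adds exactly one twist at the edge whose corner becomes the right corner $c_1$, $c_2$, or $c_3$ respectively, possibly after a suitable permutation of the three leaves around $v$. A short case analysis on $\F$ shows that gluing produces a valid precubic unicellular map of type $h$, that the new vertex does satisfy the defining conditions of an intertwined node of flavor $\F$, and that gluing is indeed the inverse of the opening by reversing the tour-concatenation of the previous step.

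For orientability, recall that a map is orientable if and only if some orientation convention has an empty set of twists. In flavor $\A$, the gluing introduces no twist at $v$, so the canonical orientation of $\n$ extends to an orientation of $\m$ with the same (possibly empty) twist set; hence $\m$ is orientable iff $\n$ is. For flavors $\B,\C,\D$, the gluing inserts exactly one twist at $v$, and since flipping a vertex toggles the twist status of all non-loop incident edges simultaneously, a parity argument along a cycle through $v$ shows that this twist cannot be eliminated by any sequence of flips, forcing $\m$ to be non-orientable. The main obstacle is the tour-concatenation analysis of the first paragraph: one must verify rigorously that the intertwined condition $c_1<c_3<c_2$ is exactly what makes the three arcs merge into a single tour of $\n$, and symmetrically that each flavor's prescription for the cyclic order and twists around the new vertex produces a well-defined inverse.
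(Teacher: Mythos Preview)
Your plan matches the paper's approach: tour analysis for the opening, explicit rotation/twist data for the gluing, then verification that the two are mutually inverse and the orientability claim. Two points need sharpening.

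First, the opening is not uniform across flavors. Writing the tour of $\m$ as $w_1 c_1 w_2 c_3 w_3 c_2 w_4$, the tour of the opened map $\n$ is $w_1 d_1 w_3 d_2 w_2 d_3 w_4$ only for flavor~$\A$; for $\B,\C,\D$ one or two of the inner subsequences appear \emph{reversed} (e.g.\ $w_1 d_1 \overline{w}_3 d_2 w_2 d_3 w_4$ for $\B$), because a right corner at $v$ means the walker switches sides when crossing the adjacent edge. Your ``three arcs concatenate into one tour'' summary is correct in outcome but suppresses precisely the case analysis in which the flavor distinction lives.

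Second, your gluing recipe for $\B,\C,\D$ as ``adds exactly one twist'' is not what works directly. For flavor~$\B$, the paper glues the three leaves with rotation $(e_1,e_3,e_2)$ at $v$ and declares \emph{two} edges, $e_1$ and $e_2$, to be twists; one then checks that $c_1$ is right while $c_2,c_3$ are left, so this already \emph{is} the canonical convention at $v$ and the flavor can be read off immediately. A one-twist recipe with the natural cyclic order defines the same map only after flipping $v$, but in that representation $v$ has two right corners and one left --- the non-canonical convention --- and you cannot verify the flavor until you perform the flip. This is the place where your sketch is most likely to go wrong, and it is exactly where the paper invests the most effort: the careful orientation-convention bookkeeping showing that opening and gluing are inverse is the heart of the proof, not a formality to be dismissed as ``reversing the tour-concatenation''.
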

The opening of intertwined nodes of type {\A} and {\B} are represented in Figure~\ref{fig:checkopeningAB}.
\end{samepage}

\begin{figure}
\centerline{\includegraphics[scale=.8]{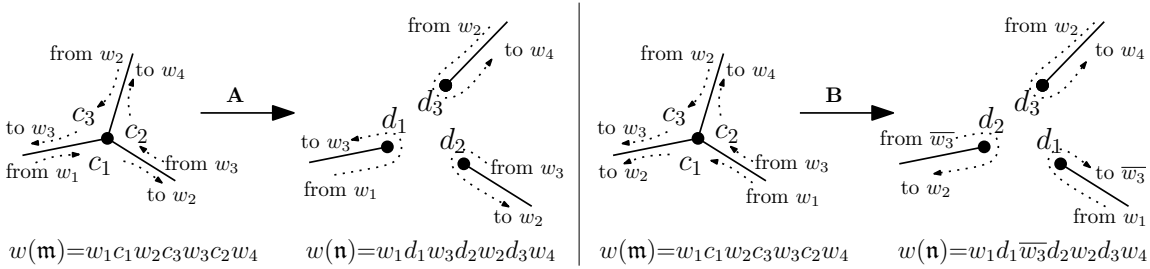}}
\caption{The tours of $\m$ and $\n$, in the case of flavor {\A}, and in the case of flavor \B.}
\label{fig:checkopeningAB}
\end{figure}

\begin{proof}
We first show that the opening of an intertwined vertex produces a unicellular map (and decreases the type by 1). Let $\m$ be a precubic unicellular map, and let $v$ be an intertwined node. Let $c_1, c_2,c_3$ be the three corners incident to $v$ in counterclockwise order, with the convention that $c_1$ is the first of these corners to appear during the tour of $\m$. Since $v$ is intertwined, the sequence of corners appearing during the tour of $\m$ has the form  
$$w(\m)=w_1 c_1 w_2 c_3 w_3 c_2 w_4,$$
where $w_1,w_2,w_3,w_4$ are sequences of corners. Let $\n$ be the embedded graph with marked vertices $v_1,v_2,v_3$ obtained by opening $\m$.  We identify the corners of $\m$ distinct from $c_1,c_2,c_3$ with the corners of $\n$ distinct from the corners $d_1,d_2,d_3$ incident to $v_1,v_2,v_3$. By following the edges of $\n$ starting from the root corner along the distinguished side of the root half-edge, one gets a sequence of corners $w(\n)$. If $v$ has flavor \A, this sequence of corners is
$$w(\n)=w_1 d_1 w_3 d_2 w_2 d_3 w_4,$$
as can be seen from Figure~\ref{fig:checkopeningAB}.
Similarly, if $v$ has flavor \B~ (resp. \C, \D) then the sequence of corner is 
$$w(\n)=w_1 d_1 \overline{w}_3 d_2 w_2 d_3 w_4,~~(\textrm{resp. } w(\n)=w_1 d_1 w_3 d_2 \overline{w}_2 d_3 w_4,~~ w(\n)=w_1 d_1 \overline{w}_2 d_2 \overline{w}_3 d_3 w_4),$$
where $\overline{w}_i$ is the \emph{mirror} of the sequence $w_i$  obtained by reading $w_i$ backward. 
In each case, the sequence $w(\n)$  contains all the corners of $\n$, implying that $\n$ is a unicellular map.  
Moreover, $\n$ has two more vertices than $\m$, so by Euler formula, its type is $h(\n)=h(\m)-1$.\\ 



We now define the \emph{gluing} operation (of flavor \A, \B, \C~or \D) which we shall prove to be the inverse of the opening operation (on node of flavor \A, \B, \C~or \D). 
Let us treat in details the gluing of flavor \B; the other flavors being similar.  
Let $\n$ be a precubic unicellular map with three distinguished leaves $v_1,v_2,v_3$ encountered in this order during the tour of~$\n$. For $i=1,2,3$ we denote by $e_i$ and $c_i$ respectively the edge and corner incident to $v_i$.
We consider the canonical orientation convention of $\n$. Clearly, $e_1,e_2,e_3$ are two-way edges, hence they are not twists for this convention (by Lemma~\ref{lemma:edgeways}).  The \emph{gluing of flavor \B} on the map $\n$ gives a map $\m$ defined as follows: the graph of $\m$ is the graph of $n$ after identification of the three leaves $v_1,v_2,v_3$ into a single vertex $v$, the rotation system of $\m$ is the same as the rotation system of $\n$ at any vertex distinct from $v$, and the rotation system at $v$ is $(e_1,e_3,e_2)$ in counterclockwise order, lastly the set of twists of $\m$ is the set of twists of $\n$ together with the two edges $e_1$ and $e_2$. 
We now prove that the map $\m$ is unicellular. Let us denote by 
$$w(\n)=w_1 d_1 w_2 d_2 w_3 d_3 w_4$$
the sequence of corners encountered during the tour of $\n$ (where the $w_i$ are sequences of corners distinct from $d_1,d_2,d_3$). Let us denote by $c_1,c_2,c_3$ the corners of the new vertex $v$ incident to $(e_1,e_3)$, $(e_3,e_2)$, $(e_2,e_1)$ respectively . 
By following the edges of $\m$ starting from the root corner along the distinguished side of the root half-edge, one gets  the sequence of corners
$$w(\m)=w_1 c_1 w_3 c_3 \overline{w}_2 c_2 w_4,$$
as can be seen from Figure~\ref{fig:checkopeningAB}. This sequence contains all corners of $\m$ showing that $\m$ is unicellular. We will now show that $v$ is an intertwined vertex of flavor \B. 
Observe first that the corners in the sequence $w_1$ are followed in the same direction during the tour of  $\m$  and $\n$, so that the corners in $w_1$ are left corners in the map $\n$ (for its canonical orientation convention) if and only if they are left corners in the map $\m$ (for its non-canonical orientation convention inherited from $\n$).
In particular, the corner preceding $c_1$ during the tour of $\m$ (the last corner in the sequence $w_1$) is a left corner since it is a left corner in $\n$ (indeed, $e_1$ is a two-way edge in $\n$ incident only to left corners by Lemma~\ref{lemma:edgeways}). Since $e_1$ is a twist of $\m$, this implies that $c_1$ is a right corner of $\m$ (for its non-canonical convention). A similar reasoning shows that $c_2$ and $c_3$ are left corners of $\m$ (for its non-canonical convention).
Since $v$ is incident to a majority of left corners, the orientation convention at $v$ is the canonical one. Hence $c_1,c_2,c_3$ are in counterclockwise order around $v$ for the canonical orientation convention of $\m$, which together with the expression of $w(\m)$ shows that $v$ is an intertwined node of flavor~\B.\\

It only remains to prove that the opening of a node of flavor \B~and the gluing of flavor \B~are reverse operations. The reader might already be convinced of this fact by reasoning in terms of ribbon graphs. Otherwise, the proof (which must deal with some orientation conventions) runs as follows. 

We first prove that opening a glued map gives the original map. Let $\n$ be a map with marked leaves $v_1,v_2,v_3$, let $\m$ be the map with new vertex $v$ obtained by the corresponding gluing of flavor \B, and let $\n'$ be the map obtained by opening $\m$ at $v$. It is clear that the graph $G$ underlying $\n$ and $\n'$ is the same and we want to prove that $\n=\n'$ (that is, there exists a set of vertices $U$ such that flipping $U$ changes the system of rotation and set of twists of $\n$ to those of $\n'$). The map $\m$ inherits an orientation convention from $\n$ which might differ from its canonical convention. These two conventions on $\m$ differ by the flipping of a certain subset of vertices $U$, and gives two different systems of rotations and two sets of twists for $\m$. By definition, the map $\n$ and $\n'$ have graph $G$ and rotation system and set of twists given by the non-canonical and canonical convention for $\m$. Observe now that one can get the rotation system and set of twists of $\n$ to those of $\n'$ by flipping the set of vertices $U'$, where $U'=U$ if  $v\notin U$ and $U'=U\setminus \{v\}\cup \{v_1,v_2,v_3\}$ otherwise. Hence, $\n=\n'$. 

We now consider a map $\m$ with intertwined vertex $v$ of flavor \B, the map $\n$ with marked leaves obtained from the opening of $\m$ at $v$, and the map $\m'$ obtained by the gluing of flavor \B. It is clear that the graph $G$ underlying $\m$ and $\m'$ is the same and we want to prove that $\m=\m'$ (that is, there exists a set of vertices $U$ such that flipping $U$ changes the system of rotation and set of twists of $\m$ to those of $\m'$).  The map $\n$ inherits an orientation convention from $\m$ which might differ from its canonical convention. Let  $v_1,v_2,v_3$ be the marked leaves of $\n$ appearing in this order  during the tour of $\n$ and let $e_1,e_2,e_3$ be the incident edges. In the orientation convention $C$ of $\n$ inherited from $\m$, the corners incident to $v_1$ and $v_2$ are right corners, while $v_3$ is a left corner (see Figure~\ref{fig:checkopeningAB}). In the orientation convention $C'$ of $\n$ obtained from the canonical convention by flipping $v_1$ and $v_2$, the corners incident to $v_1$ and $v_2$ are right corners, while $v_3$ is a left corner. This implies that one goes from the convention $C$ to the convention $C'$ by flipping a subset of vertices $U$ not containing $v_1,v_2,v_3$. By definition, the maps $\m$ and $\m'$ have graph $G$ and system of rotation and twists inherited respectively from the conventions $C$ and $C'$ on $\m$. Since the orientation convention of $\m$ and $\m'$ coincide at $v$ (the edges $e_1,e_3,e_2$ appear in this counterclockwise order around $v$) and $v_1,v_2,v_3\notin U$, one gets from the system of rotation and twists of $\m$ to those of $\m'$ by flipping the set of vertices $U$. Hence, $\m=\m'$.
\end{proof}

\section{Main results.}
\label{sec:main}

\subsection{The number of precubic unicellular maps.}

In this section, we present our main results, which rely on two ingredients. The first one is Proposition~\ref{prop:opening}, which enables us to express the number of precubic unicellular maps of type $h$ with a marked intertwined node in terms of the number of unicellular maps of a smaller type. The second ingredient is the fact (to be discussed in Section~\ref{sec:averaging}) that, among maps of type $h$ and fixed size, the average number of intertwined nodes in a map is $2h-1$.\\ 

In order to use Proposition~\ref{prop:opening}, we first need to determine the number of way of choosing non-root leaves in precubic maps.
\begin{lemma}\label{lemma:numberleaves}
Let $h\in\frac{1}{2}\NN$ and let $\m$ be a precubic unicellular map of type $h$. 
Then, the number of edges of $\m$ is at least $6h-1$ and is odd if the type $h$ is an integer and even otherwise. Moreover, if $\m$ has  $2m+\mathbbm{1}_{h\in\NN}$ edges, then it has $m+1-3h-\frac{1}{2}\Ind_{h\notin \NN}$ non-root leaves.
\end{lemma}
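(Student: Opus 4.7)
The plan is a direct computation with Euler's formula and the handshake identity. Let $\m$ be a precubic unicellular map of type $h$, let $v$ denote its number of vertices and $e$ its number of edges, and let $\ell$ denote the total number of leaves (including the root-vertex). Since $\m$ has a single face, Euler's formula $v-e+f=2-2h$ gives
\begin{equation*}
v=e+1-2h.
\end{equation*}
Since every non-leaf vertex has degree $3$, the handshake lemma yields $\ell+3(v-\ell)=2e$, that is, $3v-2\ell=2e$. Substituting the expression for $v$ gives
\begin{equation*}
e=2\ell+6h-3,\qquad\text{equivalently}\qquad \ell=\frac{e+3-6h}{2}.
\end{equation*}

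From this identity I will read off all three claims. For the parity statement, note that $6h-3$ is odd when $h$ is an integer and even when $h\in\frac12\NN\setminus\NN$ (since then $6h$ is odd); as $2\ell$ is even, $e$ has the asserted parity. For the lower bound, the root vertex is itself a leaf, so $\ell\geq 1$, which together with $e=2\ell+6h-3$ forces $e\geq 6h-1$. Finally, assume $e=2m+\Ind_{h\in\NN}$. If $h\in\NN$, then $\ell=(2m+1+3-6h)/2=m+2-3h$, and otherwise $\ell=(2m+3-6h)/2=m+\tfrac{3}{2}-3h$; in both cases the number of non-root leaves is
\begin{equation*}
\ell-1=m+1-3h-\tfrac{1}{2}\Ind_{h\notin\NN},
\end{equation*}
as claimed.

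There is no real obstacle here: the content is a two-line linear-algebra computation combining Euler's formula with the degree sum, followed by a case distinction on the parity of $6h$ to handle the indicator functions. The only thing to be careful about is that the root-vertex is stipulated to have degree $1$ (so it is included in the count $\ell$ of degree-$1$ vertices, and the quantity $\ell-1$ indeed counts non-root leaves).
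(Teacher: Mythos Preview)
Your proof is correct and essentially identical to the paper's: both combine Euler's formula $v=e+1-2h$ with the degree-sum relation $n_1+3n_3=2e$ to obtain $n_1=e/2+3/2-3h$ (your $\ell$), then read off the parity, the bound $e\geq 6h-1$ from $n_1\geq 1$, and the non-root leaf count. The only differences are cosmetic (you solve for $e$ in terms of $\ell$ first and spell out the parity of $6h-3$, which the paper leaves implicit).
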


\begin{proof}
Let $n_1$ and $n_3$ be the number of vertices of degree $1$ and $3$ in $\m$, respectively. One has $n_1+n_3=v(\m)$ and $n_1+3n_3=2e(\m)$. Moreover, Euler formula gives $v(\m)=e(m)+1-2h$. Solving this system of equations gives $n_1=e(\m)/2+3/2-3h$. Since $n_1\geq 1$  (because the root vertex of a precubic maps is a leaf) this implies the stated conditions on $e(\m)$. Moreover the number of non root leaves  is $n_1-1=m+1-3h-\frac{1}{2}\Ind_{h\notin \NN}$.
\end{proof}


Let $h\geq 1$ be an element of $\frac{1}{2}\NN$, and let $m\geq 1$ be an integer. We denote by $\mathcal{O}_h(m)$ and $\mathcal{N}_h(m)$ respectively the sets of orientable and non-orientable precubic unicellular maps of type $h$ with $2m+\mathbbm{1}_{h\in\NN}$ edges, and we denote by  $\xi_h(m)$ and $\eta_h(m)$ their cardinalities. From Lemma~\ref{lemma:numberleaves} and Proposition~\ref{prop:opening}, the number $\eta_h^{\inter}(m)$ of \emph{non-orientable} unicellular precubic maps of type $h$ with $n$ edges and a marked intertwined node is given by:
\begin{eqnarray}\label{eq:trisectioncount}
\eta_h^\inter(m)  = 4 {\ell \choose 3} \eta_{h-1}(m) +  3 { \ell\choose 3} \xi_{h-1}(m),
\end{eqnarray}
where $\ell=m+4-3h-\frac{1}{2}\Ind_{h\notin \NN}$ is the number of non-root leaves in precubic unicellular maps of type $h-1$ having  $2m+\mathbbm{1}_{h\in\NN}$ edges. Here, the first term accounts for intertwined nodes obtained by gluing three leaves in a non-orientable map of type $h-1$ (in which case the flavor of the gluing can be either {\A}, {\B}, {\C} or {\D}), and the second term corresponds to the case where the starting map of type $h-1$ is orientable (in which case the gluing has to be of flavor {\B}, {\C} or {\D} in order to destroy the orientability).\\ 

The keystone of this paper, to be proved in Section~\ref{sec:averaging} is the following result:
\begin{proposition}\label{prop:average}
There exists and involution $\Phi$ of $\mathcal{N}_h(m)$ such that for all maps $\m\in\mathcal{N}_h(m)$, the total number of intertwined nodes in the maps $\m$ and $\Phi(\m)$ is $4h-2$.
 In particular, the average number of intertwined nodes of elements of $\mathcal{N}_h(m)$ is $(2h-1)$, and one has $\eta_h^{\inter}(m) = (2h-1) \eta_h(m)$.
\end{proposition}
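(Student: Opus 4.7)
My approach is to construct an explicit involution $\Phi \colon \mathcal{N}_h(m) \to \mathcal{N}_h(m)$ and verify the pointwise identity $I(\m) + I(\Phi(\m)) = 4h-2$, where $I(\m)$ denotes the number of intertwined nodes of $\m$. Once this is established, the consequences for the average and for the identity $\eta_h^{\inter}(m) = (2h-1)\eta_h(m)$ follow immediately: summing the identity over $\m \in \mathcal{N}_h(m)$ and using that $\Phi$ is a bijection gives $\sum_{\m} I(\Phi(\m)) = \sum_{\m} I(\m)$, hence $2\sum_{\m} I(\m) = (4h-2)\,\eta_h(m)$; and by double-counting pairs (map, marked intertwined node) one has $\sum_{\m} I(\m) = \eta_h^{\inter}(m)$.

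For the construction of $\Phi$, my first attempt would proceed as follows. Because $\m$ is non-orientable, its canonical orientation convention cannot make every corner a left corner, so some degree-$3$ vertices must carry right corners (equivalently, there is at least one twist). Using the tour order as the canonical total order on corners (with the root corner as least element), I would pick a distinguished \emph{landmark} inside $\m$---for instance, the first right corner in the tour, or the first degree-$3$ vertex of a prescribed flavor. The involution $\Phi$ would then be defined by a local rewiring in a small neighborhood of this landmark, modifying the rotation system and the twist status at one or a few edges in a controlled way. The rewiring must be designed so that it is self-inverse, preserves the topology and size of $\m$, and toggles the intertwined status of a controlled subset of vertices.

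The verification would then split into three steps: (i) check that $\Phi(\m) \in \mathcal{N}_h(m)$, i.e.\ that the rewiring preserves unicellularity, the edge count $2m + \Ind_{h\in\NN}$, the type $h$, and non-orientability; (ii) check involutivity by showing that the canonical landmark of $\Phi(\m)$ coincides with that of $\m$, so that the inverse surgery is again $\Phi$; (iii) prove the balance equation $I(\m) + I(\Phi(\m)) = 4h-2$ by tracking, for each degree-$3$ vertex, how its intertwined status changes under the rewiring, with a case analysis on the flavors \A, \B, \C, \D of Definition~\ref{def:intertwined}. Step (iii) would presumably rely on tour-corner sequence manipulations in the same spirit as the proof of Proposition~\ref{prop:opening}.

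The main obstacle is designing the rewiring so as to hit the constant $4h-2$ \emph{exactly}, rather than some other $h$-dependent constant. The value $2h-1$ is finely tuned---it is one less than Chapuy's orientable analogue $2g$, and this shift reflects a genuine asymmetry of non-orientable surfaces that the involution must compensate for with precision. A priori, it is not clear how many rotation swaps and twist changes are required to produce the correct balance; I would expect the right construction to emerge only after working out the small cases $h = 1$ (Klein bottle) and $h = 3/2$ by hand, then extrapolating the local pattern.
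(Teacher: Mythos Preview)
Your proposal is not a proof but a plan, and the plan itself diverges from the paper's approach in a way that would make it hard to complete. You suggest a \emph{local} rewiring near a single landmark (one vertex, one or two edges) and hope that a case analysis on flavors will produce the constant $4h-2$. But the number $4h-2$ is a global invariant, and there is no reason a local surgery should affect the intertwined status of vertices far away in the right way; nothing in your outline explains how to control the tour order of \emph{all} corners after the rewiring, which is what determines which nodes are intertwined.

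The paper takes a global route and, crucially, does not try to track intertwined nodes directly. It first proves (Lemma~\ref{lemma:trisection}) the identity
\[
\tau(\m) \;=\; 2h \;+\; \TRL(\m) \;-\; \TLR(\m),
\]
so that the balance equation $\tau(\m)+\tau(\Phi(\m))=4h-2$ reduces to $\TLR(\m)+\TLR(\Phi(\m))=k+1$, where $k$ is the number of twists. The map $\Phi$ is then defined by cutting \emph{all} twists of $\m$, obtaining a map $\hm$ with $2k$ buds, letting $\sigma$ be the permutation ``next bud around a face of $\hm$'', and regluing into a twist each pair $\sigma(b),\sigma(b')$ whenever $b,b'$ formed a twist of $\m$. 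Unicellularity of $\Phi(\m)$ and the twist count are proved by an explicit computation of the tour of $\Phi(\m)$ in terms of $\sigma$ (Lemma~\ref{lem:tour-Psi(m)}), and the identity $\TLR(\m)+\TLR(\Phi(\m))=k+1$ follows from a short combinatorial argument on $\sigma$ (Proposition~\ref{prop:Psi-average}). Finally --- and this is worth noting since you aim directly for an involution --- the map $\Phi$ so constructed is \emph{not} an involution; the paper obtains an involution only at the very end by setting it to be $\Phi$ on maps with $\tau>2h-1$, $\Phi^{-1}$ on maps with $\tau<2h-1$, and the identity on maps with $\tau=2h-1$.

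In short, the missing ingredients in your sketch are: (a) the twist-counting lemma that converts the problem from intertwined nodes to left-to-right twists, and (b) a concrete global construction acting on all twists simultaneously. Without these, step~(iii) of your plan has no mechanism to produce the exact constant $4h-2$.
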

It is interesting to compare Proposition~\ref{prop:average} with the analogous result in~\cite{Chapuy:PTRF}: in the orientable case, \emph{each} map of genus $h$ has exactly $2h$ intertwined nodes, whereas here the quantity $(2h-1)$ is only an \emph{average value}. For example, Figure~\ref{fig:klein} shows two maps on the Klein bottle ($h=1$) which are related by the involution $\Phi$: they have respectively $2$ and $0$ intertwined nodes.\\

As a direct corollary of Proposition~\ref{prop:average} and Equation~\Eref{eq:trisectioncount}, we can state our main result:
\begin{theorem}\label{thm:recursion}
The numbers $\eta_h(m)$ of non-orientable  precubic unicellular maps of type $h$ with $2m+\mathbbm{1}_{h\in\NN}$ edges obey the following recursion:
\begin{align}\label{eq:main}
(2h-1) \cdot \eta_h(m) = 4 {\ell\choose 3} \eta_{h-1}(m) +  3 {\ell\choose 3} \xi_{h-1}(m),
\end{align}
where $\ell=m+4-3h-\frac{1}{2}\Ind_{h\notin \NN}$, and
where $\xi_h(m)$ is the number of orientable precubic unicellular maps of genus $h$ with $2m+\mathbbm{1}_{h\in\NN}$ edges, which is $0$ if $h\not\in\NN$, and is given by the following formula otherwise~\cite{Chapuy:unicellular}:
\begin{eqnarray}\label{eq:orientable}
\xi_h(m) = \frac{1}{(2h)!!} {m+1 \choose 3, 3, \dots, 3,  m+1-3h} \mathrm{Cat}(m)=\frac{(2m)!}{12^hh! m!(m+1-3h)! }. 
\end{eqnarray}
\end{theorem}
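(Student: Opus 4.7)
The plan is to derive the recurrence by combining two ingredients already in place: the opening/gluing bijection of Proposition~\ref{prop:opening}, and the averaging statement of Proposition~\ref{prop:average}. Both are established (or at least stated) prior to the theorem, so the proof is really a short accounting argument, organized in two steps.

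First, I would establish Equation~\Eref{eq:trisectioncount}. Fix a flavor $\F\in\{\A,\B,\C,\D\}$. By Proposition~\ref{prop:opening}, opening at intertwined nodes of flavor $\F$ is a bijection between precubic type-$h$ maps with $2m+\Ind_{h\in\NN}$ edges equipped with a marked intertwined node of flavor $\F$, and precubic type-$(h-1)$ maps with the same number of edges equipped with three distinguished non-root leaves. By Lemma~\ref{lemma:numberleaves}, every precubic type-$(h-1)$ map with $2m+\Ind_{h\in\NN}$ edges has exactly $\ell=m+4-3h-\frac{1}{2}\Ind_{h\notin\NN}$ non-root leaves, so there are $\binom{\ell}{3}$ ways of distinguishing three of them. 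The orientability clause of Proposition~\ref{prop:opening} then forces a case split on whether the starting type-$(h-1)$ map is orientable: if it is non-orientable, all four flavors of gluing yield a non-orientable map of type $h$, while if it is orientable, only the three flavors $\B,\C,\D$ produce a non-orientable map (flavor $\A$ would preserve orientability). Summing the four contributions yields exactly Equation~\Eref{eq:trisectioncount}.

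Second, I would invoke Proposition~\ref{prop:average}, which rewrites the left-hand side of \Eref{eq:trisectioncount} as $(2h-1)\eta_h(m)$. This gives the announced recurrence \Eref{eq:main}. The explicit formula \Eref{eq:orientable} for $\xi_h(m)$ appearing on the right-hand side is not reproved here but is imported verbatim from \cite{Chapuy:unicellular}; together with the initial value $\eta_{1/2}(m)$ (computable by hand, as the projective plane case) it would make the recurrence effective.

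The main obstacle in this chain of reasoning is hidden inside Proposition~\ref{prop:average}: constructing an averaging involution $\Phi$ on $\mN_h(m)$ under which the sum of the numbers of intertwined nodes in $\m$ and $\Phi(\m)$ is identically $4h-2$ is the substantive combinatorial content, and it is deferred to Section~\ref{sec:averaging}. This contrasts sharply with the orientable situation of \cite{Chapuy:PTRF}, where the count of intertwined nodes is pointwise constant; here one only controls a mean, and the entire novelty of the argument lies in exhibiting $\Phi$. Once that involution is in hand, Theorem~\ref{thm:recursion} follows as an essentially immediate bookkeeping corollary of Proposition~\ref{prop:opening}, Lemma~\ref{lemma:numberleaves}, and Proposition~\ref{prop:average}.
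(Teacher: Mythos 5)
Your proposal is correct and follows exactly the paper's own route: Equation~\Eref{eq:trisectioncount} is obtained from Proposition~\ref{prop:opening} together with the leaf count of Lemma~\ref{lemma:numberleaves} (with the same flavor case split according to orientability of the type-$(h-1)$ map), and then Proposition~\ref{prop:average} converts $\eta_h^{\inter}(m)$ into $(2h-1)\eta_h(m)$, with the formula for $\xi_h(m)$ imported from \cite{Chapuy:unicellular}. You also correctly identify that the real content lies in the averaging involution of Section~\ref{sec:averaging}, which is precisely how the paper structures the argument.
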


Explicit formulas for the numbers $\eta_h(m)$ can now be obtained by iterating the recursion given in Theorem~\ref{thm:recursion}.
\begin{corollary}[the case $h\in\NN$]\label{cor:integer}
Let $h\in\NN$ and $m\in\NN$, $m\geq 3h-1$. Then the number of non-orientable precubic unicellular maps of type $h$ with $2m+1$ edges equals:
\begin{align}\label{eq:integer}
\eta_h(m) =  c_h {m+1 \choose 3, 3, \dots, 3, m+1-3h} \mathrm{Cat}(m) = \frac{c_h \cdot (2m)!}{6^h m!(m+1-3h)! }
\end{align}
where $c_h=\displaystyle 3 \cdot 2^{3h-2} \frac{h!}{(2h)!} \sum_{l=0}^{h-1} {\ 2 l\ \choose l}16^{-l}$.
\end{corollary}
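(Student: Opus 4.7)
The plan is to prove the corollary by induction on $h\in\NN$, using the recurrence of Theorem~\ref{thm:recursion}. For integer $h$ the indicator $\Ind_{h\notin\NN}$ vanishes, so $\ell=m+4-3h$, and the recursion stays within integer types (no half-integer values of $h$ appear). The base case $h=0$ is immediate: there is no non-orientable surface of type $0$, so $\eta_0(m)=0$, matching the formula since the empty sum gives $c_0=0$.

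For the inductive step, I would substitute the inductive formula for $\eta_{h-1}(m)$ and the explicit expression~\Eref{eq:orientable} for $\xi_{h-1}(m)$ into the right-hand side of~\Eref{eq:main}. The key combinatorial simplification is the multinomial identity
\[
\binom{m+4-3h}{3}\binom{m+1}{3,3,\ldots,3,m+4-3h} = \binom{m+1}{3,3,\ldots,3,m+1-3h},
\]
where the multinomial on the left has $h-1$ threes and the one on the right has $h$. After applying this identity, both $\mathrm{Cat}(m)$ and $\binom{m+1}{3,\ldots,3,m+1-3h}$ factor out uniformly, and~\Eref{eq:main} collapses to a pure scalar recurrence on the coefficients:
\[
(2h-1)\,c_h \;=\; 4\,c_{h-1} \;+\; \frac{3}{2^{h-1}(h-1)!}, \qquad c_0=0.
\]

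It then remains to solve this linear recurrence. Introduce the auxiliary quantity $a_h := 2^{3h-2}\,h!/(2h)!$, for which a direct ratio check yields $(2h-1)\,a_h = 4\,a_{h-1}$. Dividing the recurrence on $c_h$ by $(2h-1)\,a_h=4\,a_{h-1}$ therefore makes the homogeneous part telescope, reducing the identity to
\[
\frac{c_h}{a_h} - \frac{c_{h-1}}{a_{h-1}} \;=\; 3\binom{2h-2}{h-1}16^{-(h-1)},
\]
where the simplification of the source term uses $4\,a_{h-1}\cdot 2^{h-1}(h-1)! = 2^{4(h-1)}/\binom{2h-2}{h-1}$. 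Telescoping from $l=0$ to $h-1$ then gives $c_h/a_h = 3\sum_{l=0}^{h-1}\binom{2l}{l}16^{-l}$, which is the announced closed form. The alternative expression in the statement follows from expanding the multinomial and the Catalan number.

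The main obstacle is purely bookkeeping: one has to align the factorial manipulations so that both the multinomial identity and the ratio $(2h-1)a_h=4a_{h-1}$ work out, ensuring the source term takes the clean shape $3\binom{2h-2}{h-1}16^{-(h-1)}$. No new topological or combinatorial argument is needed beyond Theorem~\ref{thm:recursion}; the proof is essentially a mechanical iteration of that recurrence, with the substitution $a_h$ serving to diagonalize its homogeneous part.
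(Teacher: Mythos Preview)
Your proof is correct and follows essentially the same route as the paper: induction on $h$ via Theorem~\ref{thm:recursion}, reduction to the scalar recurrence $(2h-1)c_h=4c_{h-1}+\frac{3}{2^{h-1}(h-1)!}$ with $c_0=0$, and then solving that recurrence. The only cosmetic difference is that you solve the recurrence by an integrating-factor substitution $a_h=2^{3h-2}h!/(2h)!$ that makes the sum telescope, whereas the paper writes the solution directly as $c_h=\sum_{l=0}^{h-1}a_l\lambda_{l+1}\cdots\lambda_{h-1}$ and then simplifies each summand---both are the standard treatment of a first-order linear recurrence.
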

\begin{corollary}[the case $h\not\in\NN$]\label{cor:noninteger}
Let $h\in\{\frac{1}{2},\frac{3}{2},\frac{5}{2},\ldots\}$ and $m\in\NN$, $m\geq 3\floor{h}-1$. Then the number of non-orientable precubic unicellular maps of type $h$ with $2m$ edges equals:
\begin{eqnarray*}
\eta_h(m) &=& \frac{4^{\floor{h}}}{(2h-1)(2h-3)\dots 1} {m-1 \choose 3, 3, \dots, 3, m -1 -3\floor{h}} \times \eta_{1/2}(m)  \\
          &=& \frac{4^{m+\floor{h}-1} (m-1)!}{6^{\floor{h}} (2h-1)!! (m-1-3\floor{h})!}.
\end{eqnarray*}
\end{corollary}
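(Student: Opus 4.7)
The plan is to iterate the recursion of Theorem~\ref{thm:recursion}. The key simplification in the half-integer case is that for every $h' \in \{3/2, 5/2, \ldots\}$ the number $h'-1$ is still a half-integer, so there are no orientable unicellular maps of type $h'-1$ and hence $\xi_{h'-1}(m) = 0$. The recursion therefore collapses to the purely two-term relation
$$
(2h'-1)\,\eta_{h'}(m) \;=\; 4\binom{m+7/2-3h'}{3}\,\eta_{h'-1}(m),
$$
obtained by substituting $\ell = m+4-3h'-\tfrac{1}{2}$ (since $h' \notin \NN$) and dropping the vanishing orientable term.

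I would then iterate this relation $\floor{h}$ times to descend from $h$ to $1/2$, reindexing by $h' = j+1/2$ for $j = 1, \ldots, \floor{h}$. Under this substitution step $j$ contributes the factor $\frac{4}{2j}\binom{m+2-3j}{3}$. The numerical constants combine to $\frac{4^{\floor{h}}}{2^{\floor{h}}\floor{h}!} = \frac{4^{\floor{h}}}{(2h-1)(2h-3)\cdots 1}$, matching the scalar prefactor of the claimed formula. For the product of binomials I would write $\binom{m+2-3j}{3} = \frac{(m+2-3j)!}{6\,(m-1-3j)!}$ and observe that the denominator $(m-1-3j)!$ at step $j$ cancels the numerator $(m+2-3(j+1))! = (m-1-3j)!$ at step $j+1$. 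The telescoping leaves
$$
\prod_{j=1}^{\floor{h}}\binom{m+2-3j}{3} \;=\; \frac{(m-1)!}{6^{\floor{h}}\,(m-1-3\floor{h})!} \;=\; \binom{m-1}{3,3,\ldots,3,m-1-3\floor{h}},
$$
establishing the first displayed equality of the corollary.

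The second displayed equality is a direct algebraic consequence, provided one supplies the base value $\eta_{1/2}(m) = 4^{m-1}$: multiplying by this factor gives $4^{\floor{h}}\cdot 4^{m-1} = 4^{m+\floor{h}-1}$ and the remaining factorials rearrange as stated. The main obstacle is exactly this base case. The coefficient $2h-1$ vanishes at $h = 1/2$, so the recursion carries no information there (consistent with the fact that a precubic unicellular map on $\mN_{1/2}$ cannot carry an intertwined node, since its opening would produce a map of negative type). Consequently $\eta_{1/2}(m) = 4^{m-1}$ must be established by an independent argument, for instance through a direct bijection between precubic unicellular maps on the projective plane of size $m$ and sequences of length $m-1$ over a four-letter alphabet, or by specialising a previously known enumeration of non-orientable unicellular maps to the precubic case.
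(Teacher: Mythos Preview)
Your iteration argument is correct and coincides with the paper's proof of the first displayed equality: the paper likewise observes that $\xi_{h-1}(m)=0$ for half-integer $h$ and simply says the formula ``is a direct consequence of an iteration of Theorem~\ref{thm:recursion}''. Your explicit reindexing and telescoping are a clean way to carry this out.

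The one place where your argument stops short is precisely where you flag it: the base value $\eta_{1/2}(m)=4^{m-1}$. The paper does supply a proof of this, via the R\'emy-type bijection you allude to. Concretely: given a precubic unicellular map on $\mN_{1/2}$ with a distinguished non-root leaf, delete that leaf and contract the resulting degree-$2$ vertex into a single edge, remembering on which of the two sides of that edge the leaf had been attached. This is a bijection between precubic projective maps with $2(m{+}1)$ edges and a marked non-root leaf, and precubic projective maps with $2m$ edges and a marked edge-side. Since a precubic projective map with $2m$ edges has $m-1$ non-root leaves and $4m$ edge-sides, one gets $m\,\eta_{1/2}(m+1)=4m\,\eta_{1/2}(m)$, hence $\eta_{1/2}(m+1)=4\,\eta_{1/2}(m)$; together with the check $\eta_{1/2}(1)=1$ (a single twisted loop attached to the root leaf) this yields $\eta_{1/2}(m)=4^{m-1}$. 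Filling in this short argument makes your proof complete and essentially identical to the paper's.
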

\begin{proof}[Proof of Corollary~\ref{cor:integer}]
It follows by induction on $h$ and Equations~\Eref{eq:main} and~\Eref{eq:orientable} that the statement of Equation~\Eref{eq:integer} holds, with the constant $c_h$ defined by the recurrence 
$c_0=0$ (since there is no non-orientable map of type 0) and $c_h = \la_{h-1} c_{h-1}+a_{h-1}$, with $\la_{h-1}=\frac{4}{2h-1}$ and  $a_{h-1}=\frac{3}{(2h-1)(2h-2)!!}=\frac{3}{2^{h-1}(h-1)!(2h-1)}$. 
The solution of this recurrence is
$
c_h = \sum_{l=0}^{h-1} a_{l} \la_{l+1} \la_{l+2}\dots \la_{h-1}.
$
 Now, by definition, the product
$a_{l} \la_{l+1} \la_{l+2}\dots \la_{h-1}$ equals $\frac{3\cdot 4^{h-1-l} }{2^ll!(2l+1) (2l+3)(2l+5)\dots(2h-1)}.
$
Using the expression $\frac{1}{(2l+1) (2l+3)\dots(2h-1)} =\frac{2^hh!}{2h!} \cdot \frac{(2l)!}{2^l l!}$ and reporting it in the sum gives the expression of~$c_h$ given in Corollary~\ref{cor:integer}.
\end{proof}

\begin{proof}[Proof of Corollary~\ref{cor:noninteger}]
Since for non-integer $h$ we have $\xi_{h-1}(m)=0$, the first equality is a direct consequence of an iteration of Theorem~\ref{thm:recursion}. Therefore the only thing to prove is that the number $\eta_\half(m)$ of precubic maps in the projective plane is $4^{m-1}$. This can be done by induction on $m$ via an adaptation of R\'emy's bijection~\cite{remy}. For $m=1$, we have $\eta_\half(m)=1$ (the only map with two edges is made of an edge joining the root-leaf to a node and of a twisted loop incident to this node). For the induction step, observe that precubic projective unicellular  maps with one distinguished non-root leaf are in bijection with precubic  projective unicellular maps with one leaf less and a distinguished edge-side: too see that, delete the distinguished leaf, transform the remaining vertex of degree $2$ into an edge, and remember the side of that edge on which the original leaf was attached. Since a projective precubic unicellular map with $2m$ edges has $m-1$ non-root leaves and $4m$ edge-sides, we obtain for all $m\geq1$ that $m\, \eta_\half(m+1)=4m\, \eta_\half(m)$, and the result follows.
\end{proof}

\noindent \textbf{Remark.} Before closing this subsection we point out that the bijection \emph{\`a la R\'emy} presented in the proof of Corollary~\ref{cor:noninteger} can be adapted to any surface. Such a bijection implies that the number $\eta_{h}(m)$ of maps on surface of integer type $h$ satisfies $(m+1-3h)\eta_h(m)=2(2m-1)\eta_h(m-1)$. This recursion on $m$ imposes the general form of the numbers $\eta_{h}(m)$:
$$\eta_{h}(m)=\frac{2m(2m-1)}{m(m-3h+1)}\eta_{h}(m-1)=\,\ldots\,=K_h\frac{(2m)!}{m!(m-3h+1)!}$$
for the constant $K_h=\frac{m_{\min}!}{(2m_{\min})!}\nu_h(m_{\min})$, where $m_{\min}=3h-1$. Similarly, for non-integer type $h$, one gets $(m-1-3\floor{h})\eta_{h}(m)=2(2m-2)\eta_{h}(m-1)$, hence
$$\eta_{h}(m)=\frac{4(m-1)}{(m-1-3\floor{h})}\eta_{h}(m-1)=\,\ldots\,=K_h\frac{4^m(m-1)!}{(m-1-3\floor{h})!},$$
for the constant $K_h=\frac{1}{4^{3\floor{h}+1}m_{\min}!}\eta_h(m_{\min})$, where $m_{\min}=3\floor{h}+1$. Observe however that the approach \emph{\`a la R\'emy} is not sufficient to determine explicitly the value of $\eta_h(m_{\min})$.\\ 

\subsection{The asymptotic number of general  unicellular maps.}
In this subsection we derive the asymptotic number of \emph{arbitrary} (i.e., non-necessarily precubic) unicellular maps of given type (the type is fixed, the number of edges goes to infinity). The expression of these asymptotic numbers was already given in \cite{BernardiRue} in terms of the  number of unicellular \emph{cubic maps} (maps with vertices of degree 3) of the same type (see also \cite{ChMaSc} for the orientable case). Moreover, the number of unicellular cubic maps of type $h$ is easily seen to be $\nu_h(m_{\min})$, where $m_{\min}=3h-1+\frac{1}{2}\Ind_{h\notin \NN}$, hence can be obtained from Corollaries~\ref{cor:integer} and~\ref{cor:noninteger}. The goal of this subsection is rather to explain how to adapt the \emph{opening bijections} described in Subsection~\ref{subsection:intertwined} to (almost all) arbitrary  unicellular maps.\\

Let $\m$ be a unicellular map of type $h$. The  \emph{core} of $\m$ is the map obtained by deleting recursively all the leaves of $\m$ (until every vertex has degree at least 2). Clearly, the core is a unicellular map of type $h$ formed by paths of vertices of degree $2$ joining vertices of degree at least $3$. The \emph{scheme} of $\m$ is the map obtained by replacing each of these paths by an edge, so that vertices in the scheme have degree at least $3$. We say that a unicellular map is \emph{dominant} if the scheme is \emph{cubic} (every vertex has degree 3).
\begin{proposition}[\cite{ChMaSc,BernardiRue}]\label{prop:dominant}
Let $h\in\frac{1}{2}\NN$. Then, among non-orientable unicellular maps of type $h$ with $n$ edges, the proportion of maps which are dominant tends to $1$ when $n$ tends to infinity.
\end{proposition}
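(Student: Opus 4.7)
The plan is to combine the standard scheme decomposition of unicellular maps with singularity analysis. Recall that the \emph{core} of a unicellular map $\m$ of type $h$ is obtained by iteratively pruning its leaves, and the \emph{scheme} $\mathfrak{s}$ is then obtained from the core by contracting each maximal path of degree-$2$ vertices to a single edge; it is a unicellular map of type $h$ all of whose vertices have degree at least $3$. Every rooted unicellular map with prescribed scheme $\mathfrak{s}$ is encoded bijectively by (i) a length $\ell_e \geq 1$ for each edge $e$ of $\mathfrak{s}$, recording how $e$ is subdivided into core edges, and (ii) a forest of planted plane trees at each corner of the resulting core. I would count rooted unicellular maps having a prescribed scheme, and then compare the asymptotic contributions of the various schemes of a given type $h$.

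First, I would classify the schemes of type $h$. Euler's formula gives $e(\mathfrak{s}) = v(\mathfrak{s}) - 1 + 2h$, and the minimum-degree condition gives $2 e(\mathfrak{s}) \geq 3 v(\mathfrak{s})$. Combining these, one obtains $e(\mathfrak{s}) \leq 6h - 3$ and $v(\mathfrak{s}) \leq 4h - 2$, with equality in both if and only if $\mathfrak{s}$ is cubic. In particular only finitely many schemes exist for a given type $h$, and the dominant schemes are precisely those of maximal size.

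Next I would derive the generating function of maps with fixed scheme. Using the functional equation $T = 1 + x T^2$ for the Catalan series $T(x) = \sum_n \Cat(n) x^n$, a short calculation shows that the generating function of a forest of planted plane trees at a single corner is exactly $T(x)$, and summing over subdivision lengths along each scheme edge yields a factor $x/(2-T(x))$. Incorporating the $2 e(\mathfrak{s})$ corners carried by scheme vertices and a combinatorial constant $\kappa_\mathfrak{s}$ absorbing rooting and scheme automorphisms, one arrives at
\[
U_\mathfrak{s}(x) \;=\; \kappa_\mathfrak{s}\left(\frac{T(x)-1}{2-T(x)}\right)^{e(\mathfrak{s})}.
\]
Since $T$ has a square-root singularity at $x = 1/4$ with $2 - T(x) \sim 2\sqrt{1-4x}$ while $T(x) - 1 \to 1$, the transfer theorems of singularity analysis give $[x^n] U_\mathfrak{s}(x) \sim C_\mathfrak{s}\, 4^n\, n^{e(\mathfrak{s})/2 - 1}$. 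Restricting to non-orientable schemes (orientability of the full map is determined by that of its scheme, since subdivision and tree attachment preserve the surface), the non-cubic non-orientable contributions are $O(4^n n^{3h-3})$, a factor $n^{-1/2}$ smaller than the cubic contribution $\Theta(4^n n^{3h-5/2})$. Summing over the finitely many non-cubic schemes and dividing by the total cubic contribution, the proportion of non-dominant maps is $O(n^{-1/2})$ and hence tends to $0$.

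The main obstacle I expect is the clean setup of the scheme decomposition as a genuine rooted bijection in the non-orientable framework: one has to specify orientations and twists on the scheme consistently (for instance via a canonical rotation system in the spirit of Section~\ref{section:topo}), and track rooting conventions and scheme automorphisms carefully so that the constant $\kappa_\mathfrak{s}$ is well-defined. A subsidiary issue is the low-topology edge case $h = 1/2$, where no scheme with all vertices of degree at least $3$ exists and the statement must be interpreted with the understanding that the core is already of trivial, hence ``dominant'', shape. Once these combinatorial subtleties are handled, what remains is a routine singularity analysis of an algebraic series.
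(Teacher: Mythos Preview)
Your proposal follows precisely the route the paper sketches in the paragraph immediately after the proposition (which the paper does not prove in detail, deferring to \cite{ChMaSc,BernardiRue}): classify the finitely many schemes, write the generating function of maps with a given scheme as a product of branch factors, and compare singular behaviours to see that only the cubic schemes contribute at leading order.

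One quantitative slip to flag: the rooting of the map cannot be absorbed into a constant~$\kappa_\mathfrak{s}$. Once the scheme and all branches are fixed, the root corner of the map can still land anywhere inside the distinguished root branch, which contributes an extra polynomial factor (in generating-function terms, an application of $2x\,\tfrac{d}{dx}$, raising the singular exponent by~$1$). This is why your coefficient exponent $3h-\tfrac52$ is one unit below the $3h-\tfrac32$ of Theorem~\ref{thm:asymptotic}. Since this correction is uniform across all schemes, the ratio of non-cubic to cubic contributions is unchanged and your $O(n^{-1/2})$ conclusion stands. Your observation about the degenerate case $h=\tfrac12$ (where no scheme with minimum degree~$3$ exists and the unique scheme is the projective loop) is also well taken.
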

The idea behind that proposition is the following. Given a scheme $\s$, one can easily compute the generating series of all unicellular maps of scheme $\s$ (there is only a finite number of schemes), by observing that these maps are obtained by substituting each edge of the scheme with a path of trees. A generating function approach then easily shows that the schemes with maximum number of edges are the only one contributing to the asymptotic number of unicellular maps. These schemes are precisely the cubic ones.\\

The opening bijection of Subsection~\ref{subsection:intertwined} can be adapted to dominant unicellular maps as follows. Given a dominant map $\m$ of type $h$ and scheme $\s$, and $v$ an intertwined node of $\s$, we can define the opening operation of $\m$ at $v$ by splitting the vertex $v$ in three, and deciding on a convention on the redistribution of the three ``subtrees" attached to the scheme at this point (Figure~\ref{fig:openingdominant}): one obtains a dominant map $ \n$ of type $h-1$ with three distinguished vertices. These vertices are not \emph{any} three vertices: they have to be in \emph{general position} in $\n$ (i.e.,  they cannot be part of the core, and none can lie on a path from one to another), but again, in the asymptotic case this does not make a big difference: when $n$ tends to infinity, the proportion of triples of vertices which are in general position tends to $1$. 
We do not state here the asymptotic estimates that can make the previous claims precise (they can be copied almost verbatim from the orientable case~\cite{Chapuy:PTRF}), but rather we state now our asymptotic theorem:
\begin{theorem}\label{thm:asymptotic}
Let $\kappa_h(n)$ be the number of non-orientable rooted unicellular maps of type $h$ with $n$ edges. Then one has, when $n$ tends to infinity:
$$
(2h-1) \kappa_h(n) \sim  4 \frac{n^3}{3!} \kappa_{h-1}(n) + 3\frac{n^3}{3!} \epsilon_{h-1}(n)
$$
where $\epsilon_h(n)$ denotes the number of orientable rooted unicellular maps of genus $h$ with $n$ edges. Therefore,
$$
\kappa_h(n)\sim_{n\to \infty} \frac{c_h}{\sqrt{\pi}6^h} n^{3h-\frac{3}{2}} 4^n \ \mbox{ if }h\in\NN, \ \ \ 
\kappa_h(n)\sim_{n\to \infty} \frac{4^{\floor{h}}}{2 \cdot 6^{\floor{h}}(2h-1)!!} n^{3h-\frac{3}{2}} 4^n \ \mbox{ if }h\not\in\NN.
$$ 
where the constant $c_h$ is defined in Corollary~\ref{cor:integer}.
\end{theorem}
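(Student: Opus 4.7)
The plan is to lift the precubic argument to arbitrary unicellular maps by working through the scheme, following the blueprint laid out in the paragraph preceding the statement.

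\emph{Reduction to dominant maps.} By Proposition \ref{prop:dominant}, non-dominant unicellular maps of type $h$ with $n$ edges form a negligible fraction of the total, so we may work with dominant maps and recover $\kappa_h(n)$ asymptotically. Define intertwined nodes of a dominant map $\m$ as the intertwined nodes of its (cubic) scheme, equipped with its canonical orientation convention.

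\emph{Opening and gluing at the scheme.} Extend the opening bijection of Subsection \ref{subsection:intertwined} as sketched in Figure \ref{fig:openingdominant}: at an intertwined node of the scheme, split the vertex into three and redistribute the three hanging tree-branches according to a fixed convention. This produces a dominant map $\n$ of type $h-1$ carrying three marked vertices in \emph{general position} (strictly outside the core of $\n$ and pairwise off each other's connecting tree-paths). Each of the four flavors $\F\in\{\A,\B,\C,\D\}$ of gluing inverts this construction, and, exactly as in Proposition \ref{prop:opening}, the glued map is orientable only when $\F=\A$ and $\n$ is already orientable.

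\emph{Counting and averaging.} The scheme of a dominant map of type $h-1$ has bounded size (depending only on $h$), so its core contains $O(1)$ vertices and the set of triples that fail to be in general position has size $O(n^2)$; hence the number of admissible triples is $\binom{n}{3}+O(n^2)\sim n^3/3!$. The averaging involution of Proposition \ref{prop:average} transfers to the scheme of a dominant map, yielding that the average number of intertwined nodes per dominant map of type $h$ is $2h-1$. Counting the set of (dominant map of type $h$, marked intertwined node) in two ways, and splitting the second count according to whether $\n$ is non-orientable (four flavors) or orientable (three flavors $\B,\C,\D$) as in Equation \Eref{eq:trisectioncount}, gives
\[(2h-1)\,\kappa_h(n)\ \sim\ 4\cdot\frac{n^3}{3!}\,\kappa_{h-1}(n)\ +\ 3\cdot\frac{n^3}{3!}\,\epsilon_{h-1}(n).\]

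\emph{Iteration to closed form.} For $h\in\NN$, iterate the recurrence using $\kappa_0\equiv 0$ and the orientable asymptotics of \cite{Chapuy:unicellular}, grounded in $\epsilon_0(n)=\Cat(n)\sim 4^n/\sqrt{\pi n^3}$; the resulting constant telescopes to the one computed in the proof of Corollary \ref{cor:integer}, giving precisely $c_h/(6^h\sqrt{\pi})$. For $h\notin\NN$, the orientable term vanishes at every step of the recursion, which collapses to $\kappa_h(n)\sim \frac{2n^3}{3(2h-1)}\,\kappa_{h-1}(n)$; iterating down to $h=1/2$ and using the base value $\kappa_{1/2}(n)\sim\tfrac12 4^n$ (computable directly or from the R\'emy-type recursion of the Remark) produces the stated constant $\frac{4^{\floor{h}}}{2\cdot 6^{\floor{h}}(2h-1)!!}$.

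\emph{Main obstacle.} The two substantive technical points are the quantitative general-position estimate (needed to turn the bijection into the leading-order asymptotic $n^3/3!$) and the extension of the averaging involution $\Phi$ of Section \ref{sec:averaging} from the precubic setting to the scheme of a dominant map. Both are expected to transfer mutatis mutandis from the orientable treatment of \cite{Chapuy:PTRF}, so the real heart of the argument is the averaging step, which is precisely the content of Proposition \ref{prop:average}.
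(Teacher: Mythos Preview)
Your proposal follows the same route as the paper: reduce to dominant maps via Proposition~\ref{prop:dominant}, perform the opening/gluing at the level of the cubic scheme, observe that almost all triples are in general position, transport the averaging involution to schemes, and iterate the resulting asymptotic recurrence exactly as in Corollaries~\ref{cor:integer}--\ref{cor:noninteger}. This is precisely the paper's argument (which is itself only sketched, with the quantitative estimates deferred to~\cite{Chapuy:PTRF}).

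One slip worth flagging: you write that the \emph{core} has $O(1)$ vertices. It is the \emph{scheme} that has $O_h(1)$ vertices; the core consists of the scheme together with all the subdivision vertices on its edges and can contain $\Theta(n)$ vertices. Consequently your pointwise bound ``bad triples $=O(n^2)$'' does not hold for every individual map. What is true---and sufficient---is that after summing over all dominant maps of type $h-1$ with $n$ edges, the total count of non-general-position triples is $o(n^3)$ times the number of such maps; this is exactly the estimate one imports from~\cite{Chapuy:PTRF}, as you yourself note in the final paragraph. So the argument is sound, but the justification you give for the $n^3/3!$ step should be rephrased as an average statement rather than a pointwise one.

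A second minor point: the base case $\kappa_{1/2}(n)\sim\tfrac12\,4^n$ does not follow from the R\'emy-type recursion of the Remark (that recursion is for precubic maps). The paper obtains it by combining the reduction of~\cite{BernardiRue} to cubic maps with the value $\eta_{1/2}(1)=1$ from Corollary~\ref{cor:noninteger}; you could equally cite a direct computation, but the R\'emy argument as stated does not apply.
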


\medskip


\section{The average number of intertwined nodes}\label{sec:averaging}

In this section we prove Proposition~\ref{prop:average} stating that the average number of intertwined nodes among precubic unicellular maps of type $h$ and size $m$ is exactly $(2h-1)$:
\begin{eqnarray}\label{eq:etainter}
\eta_h^{\inter}(m) = (2h-1) \eta_h(m).
\end{eqnarray}

Let us emphasize the fact that the number of intertwined nodes is not a constant over the set  of unicellular precubic maps of given type and number of edges. For instance, among the six maps with 5 edges on the Klein bottle $\mN_1$, three maps have 2 intertwined nodes, and three maps have none; see  Figure~\ref{fig:mapsKlein}. 
As stated in Proposition~\ref{prop:average}, our strategy to prove Equation~\Eref{eq:etainter} is to exhibit an involution $\Phi$ from the set $\mathcal{N}_{h}(m)$ to itself, such that  for any given map $\m$, the total number of intertwined nodes in the maps $\m$ and $\Phi(\m)$ is $4h(\m)-2$. Observe from Figure~\ref{fig:mapsKlein} that the involution $\Phi$ cannot be a simple re-rooting of the map $\m$. 

\begin{figure}[h]
\centerline{\includegraphics[scale=.7]{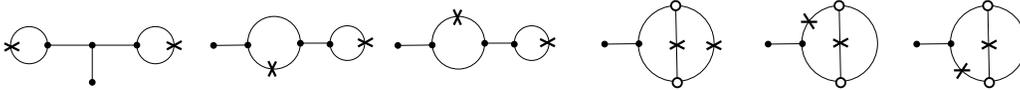}}
\caption{The precubic unicellular maps with 5 edges on the Klein bottle (the root is incident to the unique leaf). Intertwined nodes are indicated as white vertices.}
\label{fig:mapsKlein}
\end{figure} 

Before defining the mapping $\Phi$, we relate the number of intertwined nodes of a map to certain properties of its twists.  Let $\m$ be a (canonically oriented) precubic map, and let $e$ be an an edge of $\m$ which is a twist. Let $c$ be the corner incident to $e$ which appears first in the tour of $\m$. We say that $e$ is \emph{left-to-right} if $c$ is a left-corner, and that it is \emph{right-to-left} otherwise (see Figure~\ref{fig:klein}). In other words, the twist $e$ is left-to-right if it changes the side of the corners from left, to right, when it is crossed for the first time in the tour of the map (and the converse is true for right-to-left twists). 
\begin{lemma}
\label{lemma:trisection}
Let $\m$ be a precubic unicellular map of type $h(\m)$, considered with its canonical orientation convention. Then, its numbers  $\tau(\m)$ of intertwined nodes, $\TLR(\m)$ of left-to-right twists, and $\TRL(\m)$ of right-to-left twists are related by:
\begin{eqnarray}\label{eq:trisectionlemma}
 \tau(\m)= 2 h(\m)+ \TRL(\m) - \TLR(\m).
\end{eqnarray}
\end{lemma}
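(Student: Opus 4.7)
My plan is to prove the identity by a direct analysis of the tour of $\m$, tracking how the L/R label of each visited corner evolves as the walker moves along the unique face-boundary.

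\textbf{Step 1 (transition rule across edges).} I would first establish the fundamental local rule: as the walker follows the tour, her ``handedness'' is preserved when she traverses a non-twist edge and is reversed when she traverses a twist. Combined with the definition of the canonical orientation (which maximizes L-corners at each vertex) this translates to: two consecutive corners $c,c'$ of the tour carry the \emph{same} L/R label iff the edge crossed between them is not a twist, and \emph{opposite} labels iff it is a twist. In particular, each twist contributes exactly one $L\to R$ and one $R\to L$ transition to the cyclic tour.

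\textbf{Step 2 (interpretation of $\TLR-\TRL$).} By definition, a twist $e$ is an $\TLR$-twist (resp.\ $\TRL$-twist) when its first tour-traversal is an $L\to R$ (resp.\ $R\to L$) transition. So the signed difference $\TLR(\m)-\TRL(\m)$ records an ordering invariant: for each twist, whether the ``L to R transition'' precedes or follows the ``R to L transition'' in the cyclic tour.

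\textbf{Step 3 (local analysis at trivalent vertices).} For a trivalent vertex $v$ with canonical corners $c_1,c_2,c_3$ in counterclockwise order, with $c_1$ the first visited, I would analyze the three corner visits by decomposing the tour into three arcs between consecutive visits to $v$. Each arc carries a parity (the number of twists crossed modulo $2$), which dictates how the walker's handedness at $v$ differs between successive visits. This parity data, combined with the canonical orientation at $v$, simultaneously determines (a) the L/R labels of $c_1,c_2,c_3$, and (b) whether the visit order is $(c_1,c_2,c_3)$ (non-intertwined) or $(c_1,c_3,c_2)$ (intertwined). The upshot should be a local contribution from $v$ which, together with the contributions of the three incident arcs, lets one match intertwined-ness against the ordering of L$\leftrightarrow$R transitions produced by twists along those arcs.

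\textbf{Step 4 (global accounting).} Summing the local contributions over all trivalent vertices and twists, and using Euler's formula $2h(\m)=e(\m)-v(\m)+1$ together with the cyclic balance $n_{LR}=n_{RL}$ (which follows from Step~1), the terms reorganize into $\tau(\m)+\TLR(\m)-\TRL(\m)=2h(\m)$, which is the claimed identity.

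The main obstacle I expect to face is Step~3: the local intertwined status at $v$ is a purely local datum, whereas the twist contributions $\TLR,\TRL$ are tied to the global ordering of the tour. Matching these correctly requires a careful case analysis across the four flavors $\A,\B,\C,\D$ of Definition~\ref{def:intertwined}, keeping track both of which corner of $v$ carries the right-label (when any) and of the parities of twists encountered along each of the three arcs cut out by $v$ in the tour.
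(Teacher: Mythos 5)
Your Step 3 is where the argument breaks down, and the claim it rests on is false as stated. You assert that the parities of the number of twists along the three arcs cut out by $v$, together with the canonical orientation at $v$, determine both the L/R labels of $c_1,c_2,c_3$ \emph{and} whether the visit order is $(c_1,c_2,c_3)$ or $(c_1,c_3,c_2)$. Consider an orientable precubic unicellular map of positive genus with its canonical (all-left) convention: it has no twists at all, so every arc at every trivalent vertex has parity $0$ and every corner is left, yet such a map contains both intertwined vertices (flavor $\A$, there are $2h$ of them) and non-intertwined ones. So identical local parity data is compatible with both outcomes: intertwined-ness is a genuinely global feature of the tour and cannot be read off from twist parities along the arcs. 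Step 4 then inherits the problem: the three arcs attached to one vertex overlap the arcs attached to every other vertex, so ``summing local contributions over vertices and twists'' counts each twist an uncontrolled number of times, and no mechanism is given for why these terms should telescope into $\TLR(\m)-\TRL(\m)$. (Step 1 is fine --- using Lemma~\ref{lemma:edgeways}, twists are one-way, so each twist is indeed crossed once as an $L\to R$ and once as an $R\to L$ transition --- but by itself it only yields the global balance of transitions, which carries no information about $\tau(\m)$.)

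What is missing is a bridging statistic that can be counted both vertex-by-vertex and edge-by-edge. The paper's proof supplies one: label each corner by its rank in the tour and call a corner a \emph{descent} if the next corner counterclockwise around its vertex has a smaller or equal label. On the vertex side, a leaf or a non-intertwined trivalent vertex has exactly one descent while an intertwined vertex has exactly two, so the number of descents equals $v(\m)+\tau(\m)$. On the edge side, assigning to each edge the two corners that follow it clockwise around its endpoints partitions the corners, and a five-case analysis (root edge, two-ways non-twist, one-way non-twist, right-to-left twist, left-to-right twist) gives descents minus ascents $=2\bigl(1+\TRL(\m)-\TLR(\m)\bigr)$; combined with the fact that descents plus ascents $=2e(\m)$ and Euler's formula, this yields \Eref{eq:trisectionlemma}. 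You would need either to adopt such a double-counting device or to replace Step 3 by an argument that genuinely tracks the global order of corner visits, e.g.\ via the opening operation of Proposition~\ref{prop:opening}; the parity bookkeeping you propose cannot do it.
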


\begin{figure}
\centerline{\includegraphics[scale=.8]{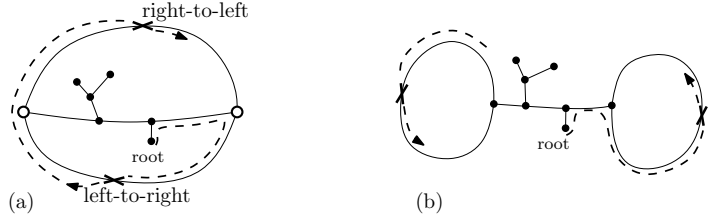}}
\caption{Two maps on the Klein Bottle $\mN_1$ and their intertwined nodes (white vertices). The number of twists are (a) $\TLR(\m)=1$, $\TRL(\m)=1$; (b) $\TLR(\m)=2$, $\TRL(\m)=0$.}
\label{fig:klein}
\end{figure} 

\begin{proof}
We first define the \emph{label} of a corner of $\m$ as the element of $\{1,\ldots,2e(\m)\}$ indicating the position of appearance of this corner during the tour of the map: the root corner has label $1$, the corner that follows it in the tour has label $2$, etc...
We say that a corner of $\m$ is a \emph{descent} if it is followed, counterclockwise around its vertex, by a corner of smaller or equal canonical label, and that it is an \emph{ascent} otherwise. We let $\dsc$ and $\asc$ be the total numbers of descents and ascents in $\m$, respectively. We will now compute the difference $\dsc - \asc$ in two different ways: one by summing over edges, the other by summing over vertices (extending the ideas used in~\cite{Chapuy:unicellular} for the orientable case).

To each edge $e=(v_1,v_2)$ of $\m$ we associate the two (distinct) corners $c_1,c_2$ incident to the vertices $v_1$ and $v_2$ respectively and following $e$ clockwise around their vertex. Clearly this, creates a partition of the set of corners of $\m$ and we can compute $\dsc - \asc$ by adding the contribution of each edge.
Let $e=(v_1,v_2)$ be an edge. For $i=1,2$ we denote by $l_i$ the label of $c_i$ and by $l_i'$ the label of the other corner incident to $v_i$ and $e$ ($l_i'=l_i$ if $v_i$ is a leaf).  
Up to exchanging $v_1$ and $v_2$, we can assume that $l_1<l_2$.
 We now examine five cases:\\
$\bullet$ $e$ is the root edge of $\m$; Figure~\ref{fig:trisectionproof}(a). Since $v_1$ is a leaf, the edge $e$ is two-ways hence not a twist and incident to left-corners only. We get $l_1=l_1'=1$,  $l_2=2e(\m)$, and $l_2'=2$. Hence  both $c_1$ and $c_2$ are descents.\\
$\bullet$ $e$ is not the root edge, is not a twist, and is two-ways; Figure~\ref{fig:trisectionproof}(b). In this case, we know by Lemma~\ref{lemma:edgeways} that all the corners are \emph{left}, from which $(l_1',l_2')=(l_2+1,l_1+1)$. Therefore $c_1$ is an ascent and $c_2$ is a descent.\\
$\bullet$ $e$ is not the root edge, is not a twist, and is one-way; Figure~\ref{fig:trisectionproof}(c). In this case,
we have  $(l_1',l_2')=(l_2+\epsilon,l_1-\epsilon)$, where $\epsilon=1$ or $-1$ according to whether $c_1$ is a left or a right corner. In both cases, $c_1$ is an ascent and $c_2$ is a descent (observe that $v_1$ cannot be a leaf by Lemma 2, hence $l_1\neq l_1'$).\\
$\bullet$ $e$ is a right-to-left twist; Figure~\ref{fig:trisectionproof}(d). In this case,  $l_1<l_2$ implies that $e$ is followed from $v_1$ to $v_2$.  
By definition of a right-to-left twist, $l_1'<l_1$ hence $c_1$ is an descent. Moreover, $l_2=l_1+1$ and $l_2'=l_1'+1$, therefore $c_2$ is also a descent.\\
$\bullet$ $e$ is a left-to-right twist; Figure~\ref{fig:trisectionproof}(e). This case is similar to the previous one. The corner $c_1$ is necessarily ascent, and since $l_2=l_1+1$ and $l_2'=l_1'+1$, the corner $c_2$ is also an ascent.
\begin{figure}[h]
\centerline{\includegraphics[scale=.9]{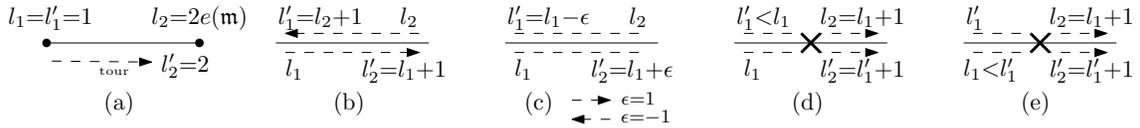}}
\caption{The five cases of the proof of Lemma~\ref{lemma:trisection}.}
\label{fig:trisectionproof}
\end{figure}

Expressing the difference $\dsc-\asc$ as a sum over all edges of $\m$, we obtain from the five cases above:
\begin{align*} 
\dsc - \asc = 2 + 0+ 0+ 2\TRL(\m) - 2\TLR(\m)  = 2 \Big(1 + \TRL(\m) - \TLR(\m)\Big).
\end{align*}
Using the fact that $\dsc+\asc=2e(\m)$, we obtain the total number of descents in $\m$ which is  $\dsc= e(\m) + 1 + \TRL(\m) - \TLR(\m)$.

Now, there is another way of counting the descents. Indeed, since by definition each non-intertwined node has exactly one descent, and each intertwined node has exactly two of them, one gets:
$\displaystyle \dsc= 2\tau(\m) + (v(\m)-\tau(\m))$.
Solving for $\tau(\m)$ and using the previous expression for $\dsc$ gives
\begin{align*}
\tau(\m)=\dsc - v(\m)= e(\m)+1 -v(\m) + \TRL(\m) - \TLR(\m).
\end{align*}
The lemma then follows by applying Euler's formula.
\end{proof}


We now define the promised mapping $\Phi$ averaging the number of intertwined nodes.  Let $\m$ be a unicellular precubic map on a non-orientable surface. We consider the canonical orientation convention for the map $\m$, which defines a rotation system and set of twists. The set of twists is non-empty since the map $\m$ lives on a non-orientable surface.
By cutting every twist of $\m$ at their middle point, one obtains  a graph together with a rotation system and some \emph{dangling half-edges} that we call \emph{buds}.
The resulting embedded graph with buds, which we denote by $\hm$, can have several connected components and each component (which is a map with buds) can have several faces; see Figure~\ref{fig:involution-qui-tue}. We set a convention for the direction in which one \emph{turns around a face} of $\hm$: the edges are followed in such a way that every corner is left (this is possible since $\hm$ has no twist). For any bud $b$ of $\hm$, we let $\sigma(b)$ be the bud following~$b$ when turning around the face of~$\hm$ containing~$b$. Clearly, the mapping $\sigma$ is a permutation on the set of buds. We now define $\Phi(\m)$ to be the graph with rotation system and twists obtained from $\hm$ by gluing together into a twist the buds $\sig(b)$ and $\sig(b')$ for every pair of buds $b,b'$ forming a twist of $\m$. The mapping $\Phi$ is represented in Figure~\ref{fig:involution-qui-tue}.\\
\begin{figure}[h!]
\centerline{\includegraphics[scale=.7]{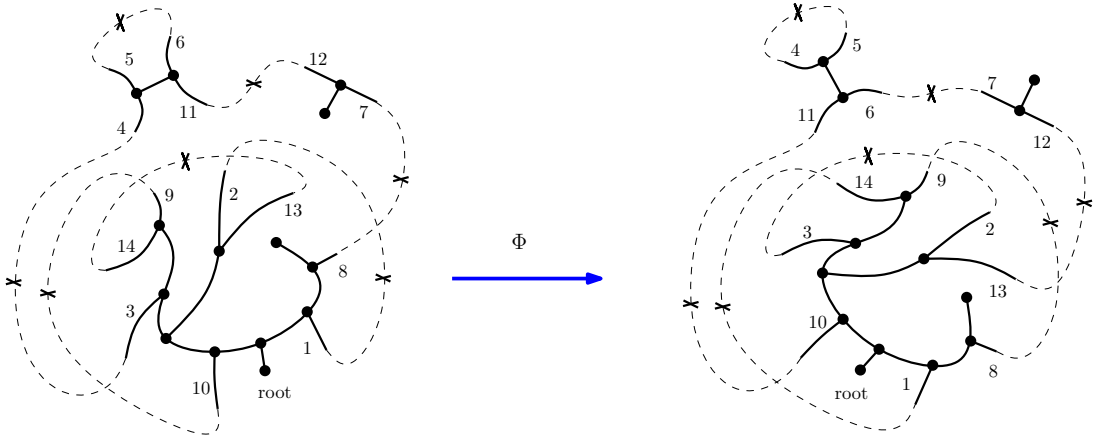}}
\caption{A unicellular map $\m$ and its image by the mapping $\Phi$. The twists are indicated by (partially) dotted lines, while the map $\hm$ is represented in solid lines.}
\label{fig:involution-qui-tue}
\end{figure}

Before proving that $\Phi(\m)$ is a unicellular map, we set some additional notations.
We denote by $k$ the number of twists of $\m$ and we denote by  
$w(\m)=w_1w_2\cdots w_{2k+1}$
the sequence of corners encountered during the tour of $\m$, where the subsequences $w_i$ and $w_{i+1}$ are separated by the traversal of a twist for $i=1\ldots 2k$. Observe that corners in $w_i$ are left corners of $\m$ if $i$ is odd, and right corners if $i$ is even (since following a twist leads from a left to a right corner or the converse). Hence, the sequence of corners encountered between two buds around a face of $\hm$ are one of the sequences $w_1',w_2',\ldots, w_{2k}'$, where $w_1'=w_{2k+1}w_1$, and for $i>1$,  $w_i'=w_i$ if $i$ is odd and $w_i'=\overline{w}_i$ otherwise (where $\overline{w}_i$ is the \emph{mirror} sequence of $w_i$ obtained by reading $w_i$ backwards). We identify the buds of $\hm$ (i.e. the half-twists of $\m$ or $\Phi(m)$) with the integers in $\{1,\ldots,2k\}$ by calling $i$ the bud following the sequence of corners $w_i'$ around the faces of $\hm$. This labelling is indicated in  Figure~\ref{fig:involution-qui-tue}. We will now consider the permutation $\sigma$ as a permutation on $\{1,\ldots,2k\}$ and we denote $r=\sigma^{-1}(1)$. The map in Figure~\ref{fig:involution-qui-tue} gives $\sig=(1,8,13,2,9,14,3,10)(4,11,6,5)(7,12)$ and $r=10$. We first prove a technical lemma.

\begin{lemma}\label{lem:parity}
The permutation $\sigma$ maps odd to even integers. In particular, $r=\sigma^{-1}(1)$ is even.
\end{lemma}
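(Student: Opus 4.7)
The plan is to prove $\sigma$ maps odd to even by tracing, for odd $i$, the bud move at bud $i$ in the $\hm$ face tour back to the two crossings of the corresponding twist in the $\m$ tour. I would first unpack the bud labelling: for $i$ odd (including $i=1$), bud $i$ sits at $v_i^-$, the entry vertex of crossing $T_i$; for $i$ even, bud $i$ sits at $v_{i-1}^+$, the exit vertex of crossing $T_{i-1}$. Working with a sided half-edge formalism---each half-edge $h$ has a left arc $h^L$ and a right arc $h^R$, and the face-on-left convention traverses $h^L$ outward from the vertex and $h^R$ inward---the bud move at a bud $b$ takes the walker outward on $b^L$, around the tip of $b$, and inward on $b^R$, transitioning between the two corners of $b$'s vertex adjacent to $b$.

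The key combinatorial fact is that the two crossings of any given twist have opposite parities. By Lemma~\ref{lemma:edgeways} every twist is one-way, so both of its crossings leave the entry vertex $v_1$ via the same half-edge $b$. But there are only two ways to leave $v_1$ via $b$ during the $\m$ tour: outward on $b^L$, which is the face-on-left out-arc of the corner of $v_1$ counterclockwise \emph{after} $b$, or outward on $b^R$, which is the face-on-right out-arc of the corner of $v_1$ counterclockwise \emph{before} $b$. Since the sequences $w_j$ are face-on-left for $j$ odd and face-on-right for $j$ even, the two crossings of the twist occur at indices of opposite parities; in particular, the corner ccw-after $b$ is canonically \emph{left} and the corner ccw-before $b$ is canonically \emph{right}.

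Putting these pieces together, fix $i$ odd with bud $b$ at $v_1=v_i^-$, and let $T_{j'}$ (with $j'$ even) be the other crossing of the same twist. Then the last corner of $w_i$ is the corner $c^+$ ccw-after $b$ (leaving $v_1$ via $b^L$), and the last corner of $w_{j'}$ is the corner $c^-$ ccw-before $b$ (leaving $v_1$ via $b^R$). In the $\hm$ face tour, the bud move at $b$ carries the walker from $c^+$ to $c^-$; and because $j'$ is even, $w_{j'}'=\overline{w_{j'}}$, making $c^-$ (the end of $w_{j'}$) the first corner of $w_{j'}'$. Therefore $\sigma(i)=j'$ is even. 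The case $i=1$ is identical since $w_1'=w_{2k+1}w_1$ still ends at $v_1^-$. The assertion that $r=\sigma^{-1}(1)$ is even follows immediately: $\sigma$ bijects the $k$ odd indices onto the $k$ even ones, so $\sigma^{-1}$ also swaps parities.

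The main obstacle is the orientation bookkeeping: the face-on-left convention is globally consistent in $\hm$ (no twists) but flips at every twist of $\m$, so at each twist half-edge $b$ one must carefully relate the outgoing arcs $b^L$ and $b^R$ in $\m$ to the canonical left/right status of the adjacent corners at $v_1$. The sided-arc formalism makes these transitions precise and also cleanly explains why exactly one of the two corners of $v_1$ adjacent to $b$ can be right in the canonical orientation.
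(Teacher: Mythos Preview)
Your argument is correct and rests on the same key observation as the paper: by Lemma~\ref{lemma:edgeways} every twist is one-way, so the two corners of $\m$ adjacent to any bud have opposite left/right status, and since the corners in $w_i'$ are left exactly when $i$ is odd, the indices $i$ and $\sigma(i)$ must have opposite parities. The paper's proof stops there, in three sentences. Your version reaches the same conclusion but does considerably more work along the way: you introduce a sided-arc formalism, explicitly identify the corner reached after the bud move as the last corner of $w_{j'}$ for the \emph{other} crossing $T_{j'}$ of the same twist, and thereby pin down $\sigma(i)=j'$ concretely. That identification is correct and is in fact the content of Fact~1 in the proof of the next lemma, but it is not needed here---all the present lemma requires is that the two corners flanking a bud have opposite parities, which follows immediately once you know the bud is incident to both a left and a right corner.
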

\begin{proof}
By Lemma~\ref{lemma:edgeways}, all twists of $\m$ are one-way. Hence, every bud of $\hm$ is incident both to a left corner and to a right corner of $\m$. The lemma therefore follows from the fact that left and right corners of $\m$ belong to the sequences $w_i'$ for $i$ odd and $i$ even respectively. 
\end{proof}
 
We are now ready to prove that $\Phi(\m)$ is unicellular and a little more. In the following, we denote by $\modu{i}$ the representative of an integer $i$ modulo $2k$ belonging to $\{1,\ldots, 2k\}$.

\begin{lemma}\label{lem:tour-Psi(m)} 
The embedded graph $\Phi(\m)$ is a unicellular map. Moreover, the rotation system and set of twists of $\Phi(\m)$ inherited from $\m$ correspond to the canonical orientation convention of $\Phi(\m)$. 
Lastly,  the sequence of corners encountered during the tour of $\Phi(\m)$ reads 
$v_1v_2\ldots v_{2k+1},$
where the subsequences $v_i$ separated by twist traversals are given by $v_i=w_{\sig(\modu{r+1-i})}$ for all $i=1,\ldots, 2k$, and $~v_{2k+1}=w_{2k+1}$.
\end{lemma}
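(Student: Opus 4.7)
My plan is to trace the tour of $\Phi(\m)$ explicitly and match it to the stated formula. Recall that $\Phi(\m)$ shares with $\m$ the underlying graph, rotation system and orientation convention, all inherited from $\hm$; they differ only in the twist-pairing, which in $\Phi(\m)$ is obtained from that of $\m$ by conjugation with $\sig$. Thus between any two consecutive twist crossings of $\Phi(\m)$'s tour the walker follows an arc of $\hm$'s face-tour, either with or against $\sig$; what remains is to determine the direction of each arc and the sequence of $\Phi$-crossings.

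The key preparatory step is to relate the twist-pairing of $\m$ to the $\hm$-labelling of the buds. I would show that in this labelling the two buds of every twist of $\m$ receive consecutive integers, so that $\m$'s twist-involution pairs $\{2j-1,2j\}$. Indeed, for $i$ odd the relation $w_i'=w_i$ shows that the bud approached at the end of $w_i$ in $\m$'s tour is precisely bud $i$; for $i+1$ even the relation $w_{i+1}'=\overline{w}_{i+1}$ shows that the bud at which $\m$ emerges just after crossing the same twist is bud $i+1$. Combined with Lemma~\ref{lem:parity}, this forces $r=\sig^{-1}(1)$ to be even and yields the identity $\sig(j+1)=\modu{\sig(j)+1}$ for every odd $j$; consequently the $\Phi$-twist partner of a bud $b$ is $\modu{b-1}$ if $b$ is odd and $\modu{b+1}$ if $b$ is even.

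I then establish the formula for $v_i$ by induction on $i$. The base case is immediate: starting at the root we walk $v_1=w_1=w_{\sig(r)}$ and reach bud $1=\sig(r)$. For the inductive step I track the end-bud of $v_i$: it equals $\sig(\modu{r+1-i})$ when $i$ is odd and $\modu{r+1-i}$ when $i$ is even, both of which are odd labels, so the $\Phi$-crossing sends the walker to an even bud. The next sub-tour $v_{i+1}$ then follows an $\hm$-arc, traversed backward if $i+1$ is even (because $v_{i+1}$ is a sequence of right corners, on which $\m$'s tour direction is opposite to $\hm$'s face-tour direction) and forward otherwise; in both cases one computes $v_{i+1}=w_{\sig(\modu{r-i})}$ with the expected new end-bud. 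After $2k$ such steps the walker returns to the root face and finishes with $v_{2k+1}=w_{2k+1}$.

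The three claims of the lemma now follow at once. The explicit formula is exactly what the induction has delivered. Unicellularity holds because $\{\sig(\modu{r+1-i}):1\le i\le 2k\}=\{1,\ldots,2k\}$ (as $\sig$ is a bijection), so together with $v_{2k+1}=w_{2k+1}$ the $v_i$'s exhaust every corner of $\m$ exactly once. And the inherited orientation is canonical for $\Phi(\m)$ because each $v_i$ is traversed in the same direction as the corresponding $w_j$ in $\m$'s tour, so every corner keeps its left/right status; since the canonical convention of $\m$ already makes each degree-$3$ vertex incident to a majority of left corners, the same majority condition holds for $\Phi(\m)$. The main obstacle in turning this plan into a complete proof will be the careful bookkeeping of bud-sides and of the twist-induced side-swap against the reversal convention $w_i'=\overline{w}_i$ for even $i$: this is what validates the parity-sensitive alternation between forward and backward $\hm$-arcs in the inductive step.
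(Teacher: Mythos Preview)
Your plan is correct and follows essentially the same route as the paper: your identity $\sig(j+1)=\modu{\sig(j)+1}$ for odd $j$ is equivalent to the paper's Fact~1 (which records the involutions $\alpha$ and $\beta=\sig\alpha\sig^{-1}$ on $\{1,\dots,2k\}$), and your induction on the end-bud of $v_i$ is a reparametrisation of the paper's induction via $\phi(i)$ and Fact~3. The only point the paper handles slightly more carefully is the determination that the tour does not close before step $2k$ (it derives $\ell=2k$ from the closure condition $v_{\ell+1}=w_{2k+1}$ and $\beta(r)=r+1$), which you assert rather than argue; this is easily filled in along the lines you sketch.
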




\begin{proof}
We consider, as above, the map $\m$ with its canonical orientation convention and the map $\Phi(\m)$ with the orientation convention inherited from $\m$. We denote by $\alpha$ the (fixed-point free) involution on $\{1,\ldots,2k\}$ corresponding to the twists of $\m$. That is to say, $\alpha(i)=j$ if the half-edges $i,j$ form a twist of $\m$. We also denote by  $\be=\si\al\si^{-1}$ the involution corresponding to the twists of $\Phi(\m)$. 

\noindent \textbf{Fact 1:}  For $i\in\{1,\ldots, 2k\}$, $\al(i)=i+1$ if $i$ is odd (hence, $\al(i)=i-1$ if $i$ is even). Similarly $\beta(i)=\modu{i+1}$ if $i$ is even  (hence $\be(i)=\modu{i-1}$ if $i$ is odd).\\ 
To prove Fact 1, recall that $w_1,\ldots,w_n$ denote the sequences of corners, encountered in that order during the tour of $\m$. If $i\in\{2,3,\ldots,2k\}$ is odd (resp. even), then the sequence of left corners $w_i=w_i'$ (resp. right corners $w_i=\overline{w}_i'$) goes from the bud~$\sig^{-1}(i)$ to the bud~$i$ (resp. from the bud $i$ to the bud $\sig^{-1}(i)$) during the tour of $\m$; see Figure~\ref{fig:fact1}. Hence, if $i\in\{1,2\ldots 2k\}$ is odd, the twist of $\m$ traversed between $w_i$ and $w_{i+1}$ is made of the half-twists $i$ and $i+1$, while if $i$ is even it is made of the half-twists  $\sig^{-1}(i)$ and $\sig^{-1}(\modu{i+1})$. 
From the odd case, one gets $\al(i)=i+1$ if $i$ is odd. From the even case, one gets $\al(\si^{-1}(i))=\si^{-1}(\modu{i+1})$ if $i$ is even. That is, $\beta(i)\equiv\si\al\si^{-1}(i)=\modu{i+1}$ if $i$ is even. 
\begin{figure}[h]
\begin{minipage}{.4\linewidth}
\centerline{\includegraphics[scale=.8]{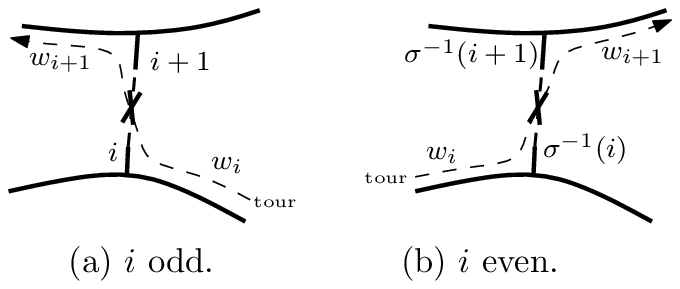}}
\caption{Proof of Fact 1.}
\label{fig:fact1}
\end{minipage} \hfill
\begin{minipage}{.4\linewidth}
\centerline{\includegraphics[scale=.8]{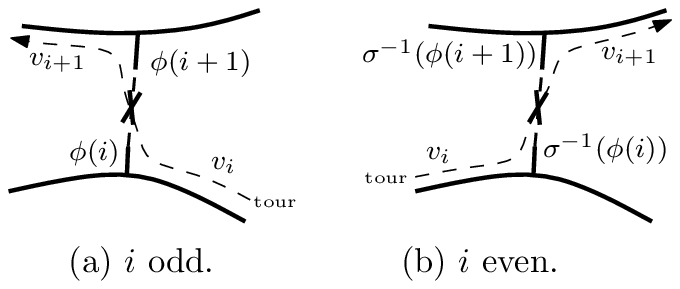}}
\caption{Proof of Fact 2.}
\label{fig:fact2}
\end{minipage}
\end{figure} 

We now denote by  $v_1v_2v_3\ldots v_{\ell+1}$ the sequence of corners encountered by following the edges of $\Phi(\m)$ starting and ending at the root corner (tour of the face of $\Phi(\m)$ containing the root),  where the subsequences $v_i$ and $v_{i+1}$ are separated by a twist traversal. Clearly, $v_1=w_1$,  $v_{\ell+1}=w_{2k+1}$ and for $i=1\ldots \ell+1$ the corners in $v_i$ are left corners of $\Phi(\m)$ if and only if $i$ is odd. For $i=1\ldots \ell$, we denote $v_i'=v_i$ if $i$ is odd and $v_i'=\overline{v}_i$ otherwise, so that each of the sequences $v_1'$ belongs to $\{w_1',\ldots,w_{2k}'\}$. For $i=1\ldots \ell$, we denote by $\phi(i)$ the bud following $v_i'$ around the faces of $\hm$. Then, the same reasoning as above (see Figure~\ref{fig:fact2}) proves:\\
\noindent \textbf{Fact 2:} For $i=1,\ldots, \ell-1$,  $\beta(\phi(i))=\phi(i+1)$ if $i$ is odd, and $\sig\beta\sig^{-1}(\phi(i))=\phi(i+1)$ if $i$ is even.\\

\noindent \textbf{Fact 3:} If  $\phi(i)=\sig(j)$ for certain integers $1\leq i< \ell$ and $1<j\leq 2k$ of different parity, then  $\phi(i+1)=\sig(\modu{j-1})$.\\
The Fact 3 is easily proved by the following case analysis. If $i$ is odd, then 
$$\phi(i+1)=\beta(\phi(i))=\beta(\sig(j))=\sig\al(j)=\si(j-1),$$
where the first and last equalities are given by Fact~2 and Fact~1 respectively. Similarly,  if $i$ is even 
$$\phi(i+1)=\sig\beta\sig^{-1}(\phi(i))=\sig\beta\sig^{-1}(\sig(j))=\sig\beta(j)=\sig(\modu{j-1}).$$

We now consider the relation  $\phi(1)=1=\sig(r)$ and recall that 1 and $r$ are of different parity by Lemma~\ref{lem:parity}. Then Fact 3 implies by induction that $\phi(i)=\si(\modu{r+1-i})$ for $i=1\ldots \ell$.  This proves that $v_i'=w'_{\si(\modu{r+1-i})}$ for $i=1\ldots\ell$. Since $i$ and $\si(\modu{r+1-i})$ have the same parity (by Lemma~\ref{lem:parity}), this also gives $v_i=w_{\si(\modu{r+1-i})}$ for $i=1\ldots \ell$. In particular, for $i=\ell$, one gets  $v_\ell=w_{\si(\modu{r+1-\ell})}$. 
Moreover, by definition $v_{\ell+1}=w_{2k+1}$, hence $v_{\ell}=w_{\si\beta(r)}$. Hence, $\beta(r)=\modu{r+1-\ell}$. Since $r$ is even, $\beta(r)=r+1$ and $\ell=2k$. 

The sequence $v_1v_2\ldots v_{2k+1}$ contains all the corners of  $\Phi(\m)$. Hence, $\Phi(\m)$ is a unicellular map. Moreover, a corner is left for the map $\m$ (resp. the map $\Phi(\m)$ considered with its orientation convention inherited from $\m$) if and only if it belongs to a sequence $w_i$ (resp. $v_i=w_{\si(\modu{r+1-i})}$) for an odd integer $i$. Since $i$ and $\si(\modu{r+1-i})$ have the same parity a corner is left for $\m$  if and only if it is left for $\Phi(\m)$.  This shows that the orientation convention of $\Phi(\m)$ inherited from $\m$ is the canonical convention of $\Phi(\m)$.
\end{proof}




We now make the final strike by considering the action of $\Phi$ on the set $\mathcal{N}_h(m)$ of non-orientable maps of type $h$.

\begin{proposition}\label{prop:Psi-average}
Let $m$ be a positive integer and $h$ be in $\{1/2,1,3/2,\ldots\}$. 
The mapping $\Phi$ is a bijection from the set $\mathcal{N}_h(m)$ to itself. Moreover, for every map $\m$ in $\mathcal{N}_h(m)$, the total number of intertwined nodes in the maps $\m$ and $\Phi(\m)$ is $4h-2$.
\end{proposition}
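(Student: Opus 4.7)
The plan is to verify three claims: (i) $\Phi(\m) \in \mathcal{N}_h(m)$ whenever $\m \in \mathcal{N}_h(m)$, (ii) $\Phi$ is an involution on $\mathcal{N}_h(m)$, and (iii) $\tau(\m) + \tau(\Phi(\m)) = 4h - 2$. Claim (i) follows from Lemma~\ref{lem:tour-Psi(m)} together with the fact that the regluing of buds preserves the vertex set, rotation system, number of edges, and hence (by Euler's formula) the type; non-orientability is preserved since the common number $k$ of twists is positive. For claim (ii), observe that cutting the twists of $\Phi(\m)$ (the $\beta$-pairs) produces the same graph with buds as cutting those of $\m$ (the $\alpha$-pairs), i.e.\ $\widehat{\Phi(\m)} = \hm$, so the permutation $\sigma$ is unchanged when $\Phi$ is reapplied; one then verifies directly that a second application of $\Phi$ returns $\m$.

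Claim (iii) is the substantive content. By Lemma~\ref{lemma:trisection} applied to both $\m$ and $\Phi(\m)$, it is equivalent to
\[
\TLR(\m) + \TLR(\Phi(\m)) = k + 1.
\]
I plan to prove this by pairing each twist $\{2j-1,2j\}$ of $\m$ with its image $\{\sigma(2j-1),\sigma(2j)\}$ in $\Phi(\m)$ (a $\beta$-pair by construction) and counting the LR twists in each pair. Setting $s := \sigma(2j-1)$ and using the consecutivity property $\sigma(2j) = \modu{s+1}$ (a direct consequence of Fact~1 in the proof of Lemma~\ref{lem:tour-Psi(m)}), the twist of $\m$ is LR iff $s > 2j - 1$, since its two tour traversals occur at positions $2j-1$ (LR direction) and $s$ (RL direction). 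By Lemma~\ref{lem:tour-Psi(m)}, the paired twist of $\Phi(\m)$ has tour traversals at positions $\modu{r+1-2j}$ (LR) and $\modu{r+1-\modu{s+1}}$ (RL), where $r := \sigma^{-1}(1)$.

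A case analysis based on whether $2j \leq r$ or $> r$ and whether $s < r$ or $\geq r$, with careful treatment of modular wraps (notably the edge case $s = 2k$, where $\modu{s+1} = 1$ forces the even traversal to land at position $r$), shows that the LR contribution of the pair is respectively $1, 2, 0, 1$ in the four cases. Writing $A, B, C, D$ for the number of indices $j \in \{1, \ldots, k\}$ in each case, and using that $s = \sigma(2j-1)$ ranges bijectively over the even values in $\{2, \ldots, 2k\}$ as $j$ ranges over $\{1, \ldots, k\}$, one has $A + B = r/2$ and $B + D = k + 1 - r/2$, so $\TLR(\m) + \TLR(\Phi(\m)) = A + 2B + D = (A + B) + (B + D) = k + 1$. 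The main obstacle is the modular bookkeeping of traversal positions in the four cases; once handled, the final counting step is a clean bijective argument.
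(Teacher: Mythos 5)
Your claim (ii) contains a genuine error: $\Phi$ is \emph{not} an involution, and the step ``one then verifies directly that a second application of $\Phi$ returns $\m$'' would fail if carried out. You are right that $\widehat{\Phi(\m)}=\hm$ and that the face permutation $\sig$ is therefore the same for $\m$ and $\Phi(\m)$; but precisely for that reason, applying $\Phi$ twice produces the map whose twist-pairing is $\sig^2\al\sig^{-2}$ (writing $\al$ for the pairing of the half-twists of $\m$), which in general differs from $\al$. For instance, for the map of Figure~\ref{fig:involution-qui-tue}, where $\sig=(1,8,13,2,9,14,3,10)(4,11,6,5)(7,12)$, the pairing $\sig^2\al\sig^{-2}$ glues the buds $1$ and $6$, whereas $\al$ glues $1$ and $2$; and the paper states explicitly, in the proof of Proposition~\ref{prop:average}, that ``as we defined it, the bijection $\Phi$ is not an involution'' (an involution is only obtained afterwards, by applying $\Phi$, $\Phi^{-1}$ or the identity according to whether $\tau(\m)$ is larger than, smaller than, or equal to $2h-1$). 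Fortunately the present proposition only asserts bijectivity, and your observation $\widehat{\Phi(\m)}=\hm$ does yield it, just not via $\Phi^2=\mathrm{id}$: the twists of $\m$ are recovered from $\Phi(\m)$ by declaring that buds $i,j$ form a twist of $\m$ if and only if $\sig(i),\sig(j)$ form a twist of $\Phi(\m)$, i.e.\ by conjugating the pairing back by $\sig^{-1}$; this exhibits an explicit inverse and hence injectivity. Replace your involution argument by this.

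The substantive part, claim (iii), is correct, and after unwinding it is essentially the paper's own computation in different bookkeeping. The paper works bud by bud: the twist of $\m$ containing bud $i$ is left-to-right iff $i<\sig(i)$ or $i=r$, the twist of $\Phi(\m)$ containing $\sig(i)$ is left-to-right iff $\modu{r+1-i}<\modu{r+1-\sig(i)}$ or $\sig(i)=r$, and the sum of the per-bud contributions is evaluated using the fact that each cycle of $\sig$ has as many elements with $i\le r<\sig(i)$ as with $\sig(i)\le r<i$. You instead pair the twist $\{2j-1,2j\}$ of $\m$ with the twist $\{\sig(2j-1),\sig(2j)\}$ of $\Phi(\m)$ and compare traversal positions; I checked that your traversal positions ($2j-1$ and $s$ for $\m$; $\modu{r+1-2j}$ and $\modu{r+1-\modu{s+1}}=\modu{r-s}$ for $\Phi(\m)$, using Lemma~\ref{lem:tour-Psi(m)}), your four contributions $(1,2,0,1)$, and the final count $A+2B+D=(A+B)+(B+D)=r/2+(k+1-r/2)=k+1$ are all correct. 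Do state explicitly where Lemma~\ref{lem:parity} enters: it guarantees that $r$ is even (so $2j-1\neq r$ and no exceptional case arises on the $\m$-side), that $\sig(2j)=\modu{\sig(2j-1)+1}$ via Fact~1, and that the two positions you compare have opposite parities and hence never tie. With those points made, your version is a valid variant of the paper's proof, trading the cycle-of-$\sig$ argument for a four-case modular check.
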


\begin{proof}
Clearly, the maps $\m$ and $\Phi(\m)$ have the same number of edges and vertices. Hence, they have the same type by Euler formula. Moreover, they both have $k>0$ twists (for their canonical convention) hence are non-orientable. Thus, $\Phi$ maps the set $\mathcal{N}_h(m)$ to itself. To prove the bijectivity (i.e. injectivity) of $\Phi$, observe that for any map $\m$, the embedded graphs $\hm$ and $\widehat{\Phi(\m)}$ are equal; this is because the canonical rotation system and set of twists of $\m$ and $\Phi(\m)$ coincide. In particular, the permutation $\sig$ on the half-twists of $\m$ can be read from $\Phi(\m)$. Hence, the twists of $\m$ are easily recovered from those of $\Phi(\m)$: the buds $i$ and $j$ form a twist of $\m$ if   $\sig(i)$ and $\sig(j)$ form a twist of $\Phi(\m)$.

We now proceed to prove that the total number of intertwined nodes in $\m$ and $\Phi(\m)$ is $4h-2$. By Lemma~\ref{lemma:trisection}, this amounts to proving that  $\TLR(\m) - \TRL(\m)+\TLR(\Phi(\m))  - \TRL(\Phi(\m))=2$. Since $\m$ and $\Phi(\m)$ both have $k$ twists, $\TLR(\m)- \TRL(\m)+\TLR(\Phi(\m))  - \TRL(\Phi(\m))=2(\TLR(\m)+\TLR(\Phi(\m))-k)$. Hence, we have to prove $\TLR(\m)+\TLR(\Phi(\m))=k+1$.

Let $i$ be a bud of $\hm$, let $t$ be the twist of $\m$ containing $i$, and let $c,c'$ be the corners preceding and following $i$ in counterclockwise order around the vertex incident to $i$. By definition, the twist $t$ of $\m$  is left-to-right if and only if $c$ appears before $c'$ during the tour of $\m$. Given that the corners $c$ and $c'$ belong respectively to the subsequences $w_i$ and $w_{\sig(i)}$ (except if $i=r$ in which case $\si(i)=1$ and $c'$ is in $w_{2k+1}$), the twist $t$ is left-to right if and only if $i<\sig(i)$ or $i=r$. 

Let us now examine under which circumstances the bud $\sig(i)$ is part of a left-to-right twist of $\Phi(\m)$. 
The corners  $d$ and $d'$ preceding and following the bud $\si(i)$ in counterclockwise order around the vertex incident to $\si(i)$ belong respectively to  $w_{\si(i)}$ and $w_{\si\si(i)}$ (except if $\si(i)=r$, in which case $\si\si(i)=1$ and $c'$ belongs to $w_{2k+1}$). By Lemma~\ref{lem:tour-Psi(m)},  $w_{\si(i)}=v_{\modu{r+1-i}}$ for $i=1\ldots 2k$.   Therefore, the twist $t'$ of $\Phi(\m)$ containing $\si(i)$ is left-to-right  (for $\hm$) if and only if $\modu{r+1-i}<\modu{r+1-\si(i)}$ or $\si(i)=r$. 

The two preceding points gives the number $\TLR(\m)+\TLR(\Phi(\m))$ of left-to right twists as 
$$\TLR(\m)+\TLR(\Phi(\m))=1+\textstyle\frac{1}{2}\sum_{i=1}^{2k}\delta(i),$$
where $\delta(i)=\mathbbm{1}_{i<\si(i)}+\mathbbm{1}_{\modu{r+1-i}<\modu{r+1-\si(i)}}$ is the sum of two indicator functions (the factor $1/2$ accounts for the fact that a twist has two halves). The contribution $\delta(i)$ is equal to 2 if  $i\leq r< \sig(i)$,~ 0 if  $\sig(i)\leq r < i$, and~1 otherwise.
 Finally, there are as many integers $i$ such that $i\leq r< \sig(i)$ as integers such that $\sig(i)\leq r < i$ (true for each cycle of $\sig$). Thus, $\sum_{i=1}^{2k}\delta(i)=2k$, and $\TLR(\m)+\TLR(\Phi(\m))=k+1$.
\end{proof}

The last Proposition is sufficient to establish Equation~\Eref{eq:etainter}, and the enumerative results of Section~\ref{sec:main}. However, Proposition~\ref{prop:average} was saying a little bit more, namely that the bijection $\Phi$ can be chosen as an \emph{involution}:

\begin{proof}[Proof of Proposition~\ref{prop:average}]
Observe that, as we defined it, the bijection $\Phi$ is not an involution. But one can easily define an involution from $\Phi$, as the mapping acting as $\Phi$ on elements $\m$ of $\mathcal{N}_h(m)$ such that $\tau(\m)>2h-1$, acting as $\Phi^{-1}$ if $\tau(\m)<2h-1$, and as the identity if $\tau(\m)=2h-1$.
\end{proof}



\bibliographystyle{alpha}
\bibliography{biblio-unicellular}
\label{sec:biblio}
\end{document}